\documentclass[11pt]{amsart}
\usepackage[margin=3cm]{geometry}

\title{Global estimates on the Brenier Map}

\author{Andrea Bidoia}
\address{Scuola Galileiana di Studi Superiori, Università di Padova; Dipartimento di Matematica "Tullio Levi-Civita", Università di Padova, Via Trieste 63, 35121 Padova, Italy}
\email{andrea.bidoia@studenti.unipd.it}

\usepackage{amsmath,amsfonts,amsbsy,amsgen,amscd,mathrsfs,amssymb,amsthm}
\usepackage{enumerate,mathtools}
\usepackage{cases}

\usepackage{esint}

\usepackage{clipboard}
\newclipboard{myclipboard}

\usepackage[T1]{fontenc} 

\usepackage{url}

\usepackage{mathrsfs}

\def\XXint#1#2#3{{\setbox0=\hbox{$#1{#2#3}{\int}$}
		\vcenter{\hbox{$#2#3$}}\kern-.5\wd0}}

\usepackage[colorlinks=true,linkcolor=blue]{hyperref} 
\usepackage{cleveref}

\makeatletter
\renewcommand*{\eqref}[1]{%
	\hyperref[{#1}]{\textup{\tagform@{\ref*{#1}}}}%
}
\makeatother

\newtheorem{theorem}{Theorem}[section]
\newtheorem*{theorem*}{Theorem}
\newtheorem{lemma}[theorem]{Lemma}

\theoremstyle{definition}

\newtheorem*{remark*}{Remark}
\newtheorem{definition}[theorem]{Definition}
\newtheorem*{definition*}{Definition}

\newtheorem*{acknowledgments}{Acknowledgments}

\AtBeginDocument{
	\setlength{\abovedisplayskip}{4pt}
	\setlength{\belowdisplayskip}{4pt}
	\setlength{\abovedisplayshortskip}{4pt}
	\setlength{\belowdisplayshortskip}{4pt}
}

\newcommand{\R}{\mathbb R}
\newcommand{\Tr}{\operatorname{Tr}}
\newcommand{\Id}{\operatorname{Id}}
\newcommand{\supp}{\mathop{\mathrm{supp}}}
\newcommand{\avgint}{\fint}
\newcommand{\x}{\overline{x}_\varepsilon}
\newcommand{\de}{\partial}

\def\e{\varepsilon}

\numberwithin{equation}{section}

\begin{document}
\newpage
\maketitle

\begin{abstract}
    Caffarelli's contraction theorem and the analogous Laplacian result in \cite{de2024optimal, gozlan2025global} are two examples of how log-Hessian bounds on probability densities yield estimates on the derivative of the corresponding Brenier map with optimal dimensional dependence. The main goal of this paper is to extend such phenomenon to a broader class of convex estimates such as norms.
\end{abstract}

\section{Introduction}
In \cite{caffarelli2000monotonicity}, Caffarelli proved that the Brenier map between a Gaussian measure and a log-concave perturbation is a contraction. A different formulation commonly referred to as "Caffarelli's contraction theorem" is given in \cite{colombo2015lipschitz, kolesnikov2011mass}:
\begin{theorem*}[Caffarelli]
	Let $d\mu=e^{-V}dx,\;d\nu=e^{-W}dx$ be probability measures on $\R^d$ with $V,W\in C^{1,1}_{\rm{loc}}(\R^d)$. Assume that there are constants $\Lambda_V,\lambda_W>0$ such that
	\begin{equation*}
		\nabla^2V\preceq\Lambda_V\Id_d,\quad\nabla^2W\succeq\lambda_W\Id_d\quad\text{a.e. in }\R^d.
	\end{equation*}
	Let $T=\nabla\phi$ be the Brenier map transporting $\mu$ to $\nu$. Then,
	\begin{equation*}
		\|\nabla^2\phi\|_{L^\infty}\leq\sqrt{\Lambda_V/\lambda_W}.
	\end{equation*}
\end{theorem*}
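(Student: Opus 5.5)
The plan is to use Caffarelli's maximum principle argument applied to the Monge--Ampère equation
\[
V(x)=W(\nabla\phi(x))-\log\det\nabla^2\phi(x).
\]
First I reduce to a smooth, compactly controlled situation. I regularize $V,W$, for instance by Gaussian convolution or by adding small uniformly convex quadratic terms, so that $\phi$ is smooth and $\nabla^2\phi$ is positive definite. This uses Caffarelli's regularity theory together with the stability of Brenier maps under weak convergence. The bounds $\nabla^2V\preceq\Lambda_V$ and $\nabla^2W\succeq\lambda_W$ should be preserved up to errors that vanish in the limit. A uniform estimate on $\nabla^2\phi$ then passes to the limit, since the maps converge locally uniformly and the Lipschitz bound is lower semicontinuous.

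Next, for a fixed unit vector $e$, I consider the second derivative $\phi_{ee}$. Differentiating the equation twice in direction $e$ gives
\[
V_{ee}=\sum_{k,l} W_{kl}\,\phi_{ke}\phi_{le}+\sum_k W_k\,\phi_{kee}-\Tr\big((\nabla^2\phi)^{-1}\nabla^2\phi_{ee}\big)+\Tr\big((\nabla^2\phi)^{-1}\nabla^2\phi_e(\nabla^2\phi)^{-1}\nabla^2\phi_e\big).
\]
The last term is nonnegative. Suppose $\phi_{ee}$ attains its maximum over all $x$ and unit $e$ at some point $x_0$. There $\nabla\phi_{ee}=0$, so the $W_k$ term vanishes, and $\nabla^2\phi_{ee}\preceq0$, so $-\Tr((\nabla^2\phi)^{-1}\nabla^2\phi_{ee})\ge0$. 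Hence
\[
\Lambda_V\ge V_{ee}\ge\lambda_W|\nabla^2\phi\, e|^2\ge\lambda_W\phi_{ee}^2,
\]
where the last step holds because $e$ maximizes the quadratic form, so $e$ is an eigenvector. This gives $\phi_{ee}\le\sqrt{\Lambda_V/\lambda_W}$.

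The main obstacle is that $\R^d$ is not compact, so the maximum of $\phi_{ee}$ need not be attained. I would handle this with a penalization: maximize $\phi_{ee}(x)-\varepsilon|x|^2$, or work with Caffarelli's original device of second difference quotients $\phi(x+he)+\phi(x-he)-2\phi(x)$ combined with such a penalization. The extra error terms are of order $\varepsilon$ times the gradient and trace terms. To control them I need a priori growth information: $\nabla^2\phi$ at most polynomial growth and $\nabla W(\nabla\phi(x))$ of linear growth. Alternatively, I would first prove the estimate for measures where $W$ is a uniformly convex quadratic outside a ball, and $V$ likewise, where asymptotics of $\phi$ are explicit, and then approximate. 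With these a priori bounds, letting $\varepsilon\to0$ and removing the regularization yields the stated bound $\|\nabla^2\phi\|_{L^\infty}\le\sqrt{\Lambda_V/\lambda_W}$.
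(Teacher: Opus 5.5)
Your formal maximum-principle computation is correct. The problem is the step you yourself flag as the main obstacle: it is left open, and neither remedy you suggest closes as stated.

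With the penalization $\phi_{ee}-\varepsilon|x|^2$, at a maximizer $(x_\varepsilon,e)$ you only have $\nabla\phi_{ee}=2\varepsilon x_\varepsilon$ and $\nabla^2\phi_{ee}\preceq 2\varepsilon\Id$. Your differentiated equation then yields
\[
\Lambda_V\ \ge\ \lambda_W\,\phi_{ee}^2+2\varepsilon\big\langle\nabla W(\nabla\phi(x_\varepsilon)),x_\varepsilon\big\rangle-2\varepsilon\Tr\big((\nabla^2\phi)^{-1}(x_\varepsilon)\big).
\]
Letting $\varepsilon\to0$ requires three things:
(i) the penalized supremum is attained, i.e.\ $\phi_{ee}=o(|x|^2)$;
(ii) control of the drift term, which has no sign;
(iii) $\varepsilon\Tr((\nabla^2\phi)^{-1}(x_\varepsilon))\to0$, i.e.\ a lower bound on $\nabla^2\phi$ at points $x_\varepsilon$ you do not control.
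You never mention (iii), and none of (i)--(iii) is available for the regularized problems. Global two-sided qualitative bounds on $\nabla^2\phi$ are essentially the global regularity you are trying to prove. The alternative with $V,W$ quadratic outside a ball rests on a false premise: $\phi$ is not explicit there. Asymptotics such as $\nabla^2\phi\to(\beta/\alpha)\Id$ at infinity would be a nontrivial theorem in their own right.

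The missing idea is Caffarelli's device, which the paper uses for $\Delta_\varepsilon$ in the proof of Theorem \ref{lapl}.
\begin{itemize}
\item Truncate the \emph{target}, setting $W=+\infty$ outside $\overline{B_R}$. This keeps $\nabla^2W\succeq\lambda_W$ on the convex set $B_R$.
\item Lemma \ref{finite_max} then gives $\nabla\phi(x)\to R\,x/|x|$ uniformly. Hence, for each fixed $h>0$, the nonnegative second differences $\delta^2_{he}\phi(x)$ tend to $0$ as $|x|\to\infty$, uniformly in $e$, and $\max_{x,e}\delta^2_{he}\phi$ is attained with no penalization at all.
\item This works only for fixed-step differences, not for $\phi_{ee}$, whose behaviour at infinity remains unknown even after truncation. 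So the maximum principle must be run on $\delta^2_{he}\phi$.
\item At the maximum point $\bar x$ one has $\delta^2_{he}\nabla\phi(\bar x)=0$ and $\nabla^2\delta^2_{he}\phi(\bar x)\preceq0$. Concavity of $\log\det$ and $\lambda_W$-convexity of $W$ on $B_R$ give $\Lambda_Vh^2\ge\lambda_W|\delta_{he}\nabla\phi(\bar x)|^2$.
\item Hence $\delta^2_{he}\phi(\bar x)\le 2h\langle\delta_{he}\nabla\phi(\bar x),e\rangle\le 2h^2\sqrt{\Lambda_V/\lambda_W}$. This is the bound up to a factor $2$, which is removed by the bootstrap $a_{n+1}=2-1/a_n$ exactly as in the proof of Theorem \ref{lapl}.
\end{itemize}
For comparison, the paper obtains the statement differently, as the case $f=\lambda_{\max}$ of Theorem \ref{thm_A}. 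It takes the trace estimate of Theorem \ref{lapl}, applies the linear change of variables of Lemma \ref{anis} with $Y=ee^T+\varepsilon\Id/d$, and lets $\varepsilon\to0$.
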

More recently, De Philippis-Shenfeld \cite{de2024optimal} and Gozlan-Sylvestre \cite{gozlan2025global} found an analogous result at the level of the Laplacians, with estimates given on $V$ and $W^*$, the Legendre transform of $W$:

\begin{theorem*}[Gozlan-Sylvestre]
	Let $d\mu=e^{-V}dx,\;d\nu=e^{-W}dx$ be probability measures on $\R^d$ with $W$ convex and $V,W,W^*\in C^{1,1}_{\rm{loc}}(\R^d)$. Assume that there are constants $\Lambda_V,\lambda_W>0$ such that
	\begin{equation*}
		\Delta V\leq\Lambda_V,\quad\Delta W^*\leq 1/\lambda_W\quad\text{a.e. in }\R^d.
	\end{equation*}
	Let $T=\nabla\phi$ be the Brenier map transporting $\mu$ to $\nu$. Then,
	\begin{equation*}
		\|\Delta\phi\|_{L^\infty}\leq\sqrt{\Lambda_V/\lambda_W}.
	\end{equation*}
\end{theorem*}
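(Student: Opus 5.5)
We prove the Gozlan--Sylvestre bound by a maximum principle applied to the Monge--Amp\`ere equation linking $\phi$ to the potentials. By standard approximation (truncating and regularizing $V,W$ while preserving the bounds $\Delta V\le\Lambda_V$ and $\Delta W^*\le 1/\lambda_W$ up to errors that vanish in the limit, and using stability of Brenier maps), we may assume $\phi$ is smooth and strictly convex and that $\Delta\phi$ attains its supremum. Establishing that $\Delta\phi$ attains its maximum, or replacing this by a penalization argument, is one of the two technical points below. The starting identity is
\begin{equation*}
	V(x)=W(\nabla\phi(x))-\log\det\nabla^2\phi(x).
\end{equation*}

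We differentiate this identity twice in each direction $e_k$ and sum over $k$. Write $A=\nabla^2\phi$ and $\phi_{ijk}$ for third derivatives. Using the concavity of $\log\det$, namely
\begin{equation*}
	-\partial_{kk}\log\det A=-\Tr(A^{-1}\partial_{kk}A)+\Tr(A^{-1}\partial_kA\,A^{-1}\partial_kA),
\end{equation*}
we obtain
\begin{equation*}
	\Delta V=\sum_k\langle\nabla^2W(\nabla\phi)Ae_k,Ae_k\rangle+\nabla W(\nabla\phi)\cdot\nabla\Delta\phi-\Tr(A^{-1}\nabla^2\Delta\phi)+\sum_k\Tr\big((A^{-1}\partial_kA)^2\big).
\end{equation*}
The last sum is nonnegative, since $A^{-1/2}\partial_kA\,A^{-1/2}$ is symmetric. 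At a maximum point $x_0$ of $\Delta\phi$ we have $\nabla\Delta\phi=0$ and $\nabla^2\Delta\phi\preceq0$, so $-\Tr(A^{-1}\nabla^2\Delta\phi)\ge0$. This gives
\begin{equation*}
	\Lambda_V\ge\Tr\big(A\,\nabla^2W(\nabla\phi)\,A\big)\quad\text{at }x_0.
\end{equation*}

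Next we convert the hypothesis on $W^*$ into a lower bound on this quantity. Since $\nabla^2W^*(\nabla W(y))=(\nabla^2W(y))^{-1}$, the condition $\Delta W^*\le1/\lambda_W$ says $\Tr(H^{-1})\le1/\lambda_W$ with $H=\nabla^2W(\nabla\phi(x_0))$. We then need
\begin{equation*}
	(\Tr A)^2=\big(\Tr(H^{1/2}A\cdot H^{-1/2})\big)^2\le\Tr(AHA)\,\Tr(H^{-1}),
\end{equation*}
which is Cauchy--Schwarz for the Hilbert--Schmidt inner product applied to $H^{1/2}A$ and $H^{-1/2}$. Combining the two bounds yields $(\Delta\phi)^2\le\Lambda_V/\lambda_W$ at $x_0$, hence everywhere. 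Because $\phi$ is convex, $\Delta\phi\ge0$, so this is exactly the bound on $\|\Delta\phi\|_{L^\infty}$ in the statement.

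The main obstacle is the non-compactness of $\R^d$: $\Delta\phi$ need not attain its supremum. We would handle this as in Kolesnikov's proof of Caffarelli's theorem. Either we apply the maximum principle to the perturbed function $\Delta\phi-\varepsilon|x|^2$, controlling the additional terms via the drift $\nabla W(\nabla\phi)\cdot x$ and letting $\varepsilon\to0$, or we first prove the estimate for measures with compact convex support and smooth densities (where Caffarelli's boundary regularity applies) and then pass to the limit using stability of optimal maps together with weak lower semicontinuity of $\|\Delta\phi\|_{L^\infty}$. A second technical point is keeping the hypothesis on $W^*$ intact under the regularization of $W$; inf-convolution with a Gaussian is a natural choice, because it acts well on Legendre transforms.
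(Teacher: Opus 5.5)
Your formal computation is correct and coincides with the paper's informal proof: the differentiated log-Monge--Amp\`ere equation, $\sum_k\Tr[(A^{-1}\partial_kA)^2]\ge0$, the optimality conditions at the maximum, and the Hilbert--Schmidt Cauchy--Schwarz inequality with $\nabla^2W^*(\nabla W)=(\nabla^2W)^{-1}$. The gap is the sentence ``we may assume \dots\ that $\Delta\phi$ attains its supremum''. No approximation gives this, and it is exactly where the rigorous proof has to do its work. Neither remedy you sketch closes it.

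\emph{Penalization.} At a maximum $x_\varepsilon$ of $\Delta\phi-\varepsilon|x|^2$ you only know $\nabla\Delta\phi=2\varepsilon x_\varepsilon$ and $\nabla^2\Delta\phi\preceq2\varepsilon\Id$, so your identity gives
\[
\Lambda_V\ge\Tr(AHA)+2\varepsilon\langle\nabla W(\nabla\phi(x_\varepsilon)),x_\varepsilon\rangle-2\varepsilon\Tr\big(A^{-1}(x_\varepsilon)\big).
\]
You would need an a priori bound $\Delta\phi=o(|x|^2)$ for this maximum to exist, and you would need both error terms to vanish. The last term is the real obstruction. $V$ is not convex, so nothing bounds $A$ from below. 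In the natural approximate setting (target in a ball $B_R$, $V\ge C_1+C_2|x|^2$) one has $\det A=e^{W(\nabla\phi)-V}\lesssim e^{-C_2|x|^2}$, hence $\Tr(A^{-1})\gtrsim e^{C_2|x|^2/d}$, and nothing prevents $|x_\varepsilon|\to\infty$. Also, Kolesnikov's method, as the paper recalls, is an $L^p$ integration argument rather than a penalized maximum principle. For Laplacians it was only carried out under the stronger hypothesis $\nabla^2W\succeq d\lambda_W$.

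\emph{Compact supports.} Caffarelli's boundary regularity makes $\Delta\phi$ continuous on $\overline\Omega$, but its maximum may sit on $\partial\Omega$. There $\nabla\Delta\phi=0$ and $\nabla^2\Delta\phi\preceq0$ fail, and you give no boundary argument.

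The missing idea is the paper's way of producing a maximum. Keep the source on all of $\R^d$ and truncate only the target to a ball $\overline{B_R}$ (Lemma \ref{lemma_approx}, which preserves both Laplacian bounds). Caffarelli's lemma then gives $\nabla\phi(x)\to R\,x/|x|$ uniformly. This says nothing about $\nabla^2\phi$, so $\Delta\phi$ still need not attain its supremum. It does force $\delta^2_{\varepsilon y}\phi(x)\to0$ as $|x|\to\infty$ for each fixed $\varepsilon$.

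So one replaces $\Delta\phi$ by $\Delta_\varepsilon\phi=\frac12\fint_{\mathbb S^{d-1}}\delta^2_{\varepsilon y}\phi\,dy\ge0$. This vanishes at infinity and hence has a global maximum $\x$, and your argument is run on the Monge--Amp\`ere equation hit by $2\Delta_\varepsilon$:
\begin{itemize}
\item concavity of $\log\det$ together with $\Delta_\varepsilon\nabla^2\phi(\x)\preceq0$ disposes of the determinant term;
\item $\Delta_\varepsilon\nabla\phi(\x)=0$ kills the first-order term of the Taylor expansion of $W$, and the cubic remainder is absorbed because $|\delta_{\varepsilon y}\nabla\phi(\x)|\to0$;
\item Cauchy--Schwarz on $\mathbb S^{d-1}$ replaces the matrix one.
\end{itemize}
Convexity only gives $\Delta_\varepsilon\phi(\x)\le\varepsilon\fint\langle\delta_{\varepsilon y}\nabla\phi(\x),y\rangle\,dy$. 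So this first yields $\Delta\phi\le2\sqrt{\Lambda_V/\lambda_W}$, after passing to the limit via $\Delta_\varepsilon\phi/\varepsilon^2\to\Delta\phi/(2d)$. The sharp constant is recovered by the bootstrap $a_{n+1}=2-1/a_n$, which uses the previous bound on $\Delta\phi$ for increments at scales $t<\varepsilon/a_n$.
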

In \Cref{sec:2}, we use a maximum principle argument on the Monge-Ampère equation to prove Gozlan-Sylvestre's theorem, in a similar fashion to Caffarelli's proof. In \Cref{sec:3}, we see how the above theorems can be generalized for a suitable class of convex functions $\mathbf{G}_d$:

\begin{definition*}
	Denote with $\mathbb{S}_d$ the set of $d\times d$ symmetric real matrices; a continuous function ${f\colon\mathbb{S}_d\to\R}$ is called \emph{good} if it is
	\begin{itemize}
		\item convex
		\item increasing in $\mathbb{S}_d^+$: $0\preceq X\preceq X'\implies f(X)\le f(X'),\,$ and $\,0\precneqq X\implies f(X)>0$
		\item positively homogeneous: $f(tX)=tf(X)\;\forall t>0$
	\end{itemize}
	and we denote with $\textbf{G}_d$ the set of \emph{good} functions.
\end{definition*}

\setcounter{theorem}{0}
\renewcommand{\thetheorem}{\Alph{theorem}}
\begin{theorem}\label{thm_A}
	Let $f\in\mathbf{G}_d$, and $d\mu=e^{-V}dx,\;d\nu=e^{-W}dx$ be probability measures on $\R^d$ with $W$ convex and $V,W,W^*\in C^{1,1}_{\rm{loc}}(\R^d)$. Assume that there are constants $\Lambda_V,\lambda_W>0$ such that
	\begin{equation*}
		f(\nabla^2V)\leq\Lambda_V,\quad f(\nabla^2W^*)\leq 1/\lambda_W\quad\text{a.e. in }\R^d.
	\end{equation*}
	Let $T=\nabla\phi$ be the Brenier map transporting $\mu$ to $\nu$. Then,
	\begin{equation*}\label{good_bound}
		\|f(\nabla^2\phi)\|_{L^\infty}\leq\sqrt{\Lambda_V/\lambda_W}.
	\end{equation*}
\end{theorem}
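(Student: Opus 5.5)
Following the maximum principle approach the paper announces for Gozlan--Sylvestre's theorem in \Cref{sec:2}, I would work with the Monge--Ampère equation
\begin{equation*}
	V(x)=W(\nabla\phi(x))-\log\det\nabla^2\phi(x),
\end{equation*}
and study the function $F(x)=f(\nabla^2\phi(x))$. The core idea is to replace the Laplacian by $f$ through its convex dual. By convexity and positive homogeneity, $f$ is the support function of a closed convex set $K\subset\mathbb S_d$, that is, $f(X)=\sup_{A\in K}\Tr(AX)$. Monotonicity on $\mathbb S_d^+$ should allow me to restrict the supremum to $A\succeq0$ when $X\succeq0$, and $\nabla^2\phi\succeq0$. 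It therefore suffices to bound $\Tr(A\nabla^2\phi)=\Delta_A\phi$ uniformly in $A\in K\cap\mathbb S_d^+$. After the linear change of variables $x\mapsto A^{1/2}x$ this is a Laplacian-type quantity, so the plan is to run the Laplacian argument for the directional operator $\partial_{AA}:=\Tr(A\nabla^2\cdot)$.

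\textbf{Step 1 (regularity and compactness).} As in Caffarelli's and Gozlan--Sylvestre's proofs, I would first reduce to smooth, well-behaved densities. One option is to perturb $V$ and $W$ with small Gaussian tails, or to restrict to nice compact convex targets, so that $\phi$ is smooth and $\Tr(A\nabla^2\phi)$ attains its supremum, or at least so that a penalized version $\Tr(A\nabla^2\phi)-\e|x|^2$ does. I would then pass to the limit using stability of Brenier maps. I expect this approximation to mirror \Cref{sec:2} exactly, since the hypotheses are preserved: $f(\nabla^2 V)$ and $f(\nabla^2W^*)$ behave well under the Gaussian convolution or perturbation, because $f$ is convex and homogeneous.

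\textbf{Step 2 (maximum principle computation).} Fix $A\in K$, $A\succeq0$, and let $x_0$ be a maximum point of $u=\Tr(A\nabla^2\phi)$. Apply $\partial_{AA}$ to the Monge--Ampère equation. Writing $\partial_{AA}=\sum_i\partial_{e_ie_i}$ in an eigenbasis of $A$, weighted by the eigenvalues of $A$, and using convexity of $-\log\det$, I get
\begin{equation*}
	\Tr(A\nabla^2V)\ \ge\ \sum_{k,l}W_{kl}(\nabla\phi)\,\Tr\bigl(A\,\partial_k\nabla\phi\,\partial_l\nabla\phi\bigr)+\nabla W(\nabla\phi)\cdot\nabla u-\Tr\bigl((\nabla^2\phi)^{-1}\nabla^2u\bigr)
\end{equation*}
plus a nonnegative term that I drop. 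At $x_0$ we have $\nabla u=0$ and $\nabla^2u\preceq0$, and the left side is at most $f(\nabla^2V)\le\Lambda_V$. Hence
\begin{equation*}
	\Lambda_V\ \ge\ \Tr\bigl(\nabla\phi_A^{\,T}\,\nabla^2W(\nabla\phi)\,\nabla\phi_A\bigr),
\end{equation*}
where $\nabla\phi_A$ denotes $A^{1/2}\nabla^2\phi$.

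\textbf{Step 3 (closing the estimate; main obstacle).} Let $H=\nabla^2\phi(x_0)$ and $M=\nabla^2W(\nabla\phi(x_0))$. Then $M^{-1}=\nabla^2W^*(\nabla W(\nabla \phi))$, so $f(M^{-1})\le1/\lambda_W$. I need the matrix inequality
\begin{equation*}
	\Tr(AHMH)\ \ge\ \frac{\Tr(AH)^2}{f(M^{-1})}\qquad\text{for }A\in K,\ A\succeq0,
\end{equation*}
which would give $u(x_0)^2\le\Lambda_V/\lambda_W$, and then take the supremum over $A$. To prove it, apply Cauchy--Schwarz in the inner product $\langle X,Y\rangle=\Tr(A X^T Y)$-like pairings: $\Tr(AH)=\Tr\bigl(A^{1/2}HM^{1/2}\cdot M^{-1/2}A^{1/2}\bigr)\le\Tr(AHMH)^{1/2}\Tr(AM^{-1})^{1/2}$, and then use $\Tr(AM^{-1})\le f(M^{-1})$ because $A\in K$. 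This step is where the choice of $A\in K$ is essential, and I expect the main delicacy to lie in making it rigorous. The difficulty is that the maximizing $A$ for $f(H)$ need not be the one at which the supremum of $u$ is attained, so I must fix $A$ first and maximize $u_A$ over $x$. That is also legitimate, since $\sup_x f(\nabla^2\phi)=\sup_A\sup_x u_A$. A secondary point is handling the case where $K$ contains non-PSD elements, which is where monotonicity is used.
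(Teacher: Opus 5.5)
Your convex-duality reduction is right, and it is how the hypotheses enter in the paper too. You write $f=\sup_{A\in\de f(0)}\langle\cdot,A\rangle$, restrict to $A\succeq0$, fix $A$ before maximizing in $x$, and use $\Tr(A\nabla^2V)\le f(\nabla^2V)$ and $\Tr(AM^{-1})\le f(M^{-1})$. You only assert the restriction to $A\succeq0$. The paper proves it by showing $\de f(X)\subset\mathbb{S}_d^+$ for $X\succ0$, via $\langle X-\alpha vv^T,A\rangle\le f(X-\alpha vv^T)\le f(X)$, and handles singular $X\succeq0$ through $X+\e\Id$. Your pointwise algebra and Cauchy--Schwarz step are also correct, with one slip: with $\nabla\phi_A=A^{1/2}\nabla^2\phi$ your displayed trace is $\Tr(A^{1/2}MA^{1/2}H^2)$. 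You want $\nabla^2\phi\,A^{1/2}$ to get $\Tr(AHMH)$.

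The routes differ in how the linear bound $\Tr(A\nabla^2\phi)\le\sqrt{\Lambda_V/\lambda_W}$ is obtained. The paper never reruns a maximum principle. For $A\succ0$ it rescales $x\mapsto A^{1/2}x$, transforming $V,W,\phi$ accordingly, and quotes Theorem~\ref{lapl}. Since this needs $A$ invertible, it then builds a positive definite $E\in\de f(0)$ and applies Lemma~\ref{anis} to $A+\e E$, using $\langle\cdot,A+\e E\rangle\le(1+\e)f$. Your direct computation with $\de_{AA}$ never inverts $A$: the dropped term $\sum_{ij}A_{ij}\Tr(H^{-1}\de_iH\,H^{-1}\de_jH)$ is nonnegative for every $A\succeq0$. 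So if completed, your route makes the construction of $E$ unnecessary. The price is that you must redo the whole rigorous argument for the anisotropic operator instead of quoting the Laplacian theorem.

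That rigorous argument is where your plan has a genuine gap, at exactly the obstacle the paper singles out: the existence of a maximum point. Neither mechanism in your Step~1 provides one.
\begin{itemize}
\item Smoothness of $\phi$ and a compact target give no a priori global bound on $\nabla^2\phi$; Caffarelli's lemma controls $\nabla\phi$, not $\nabla^2\phi$. So $u_A$ need not attain, or even have finite, supremum.
\item The penalization $u_A-\e|x|^2$ presupposes such a bound. At its maximum $x_\e$ it also leaves the term $-2\e\Tr(H^{-1}(x_\e))$ in your inequality. This term is uncontrolled unless you already know $x_\e$ stays bounded, because $\det H=e^{W(\nabla\phi)-V}\lesssim e^{-C_2|x|^2}$ makes $\Tr(H^{-1})$ blow up at infinity.
\end{itemize}

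The missing idea is to maximize a finite-difference quantity instead. The function $\avgint_{\mathbb{S}^{d-1}}\big(\phi(x+\e A^{1/2}y)-\phi(x)\big)\,dy$ is nonnegative and tends to $0$ as $|x|\to\infty$ by Lemma~\ref{finite_max} (target a ball), so it attains a maximum. At that point the discrete versions of your Steps~2--3 go through:
\begin{itemize}
\item concavity of $\log\det$;
\item a Taylor remainder for $W$, absorbed using $\nabla^2W\succeq\beta\lambda_W\Id$, which follows from $f\ge\beta\Tr$ on $\mathbb{S}_d^+$ for some $\beta>0$;
\item Cauchy--Schwarz with $\avgint\langle A^{1/2}y,M^{-1}A^{1/2}y\rangle\,dy=\Tr(AM^{-1})/d$.
\end{itemize}
Comparing $\delta_{\e y}\nabla\phi$ with $\delta^2_{\e y}\phi$ loses a factor $2$, which you must remove with the bootstrap $a_{n+1}=2-1/a_n$. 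All of this works for singular $A\succeq0$ as well.

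Finally, your claim that $f(\nabla^2V)\le\Lambda_V$ survives the log-convolution is false in general. You have $\nabla^2V_t\preceq\mathbb{E}[\nabla^2V]$, but $\nabla^2V$ need not be positive semidefinite and $f$ is monotone only on $\mathbb{S}_d^+$ (take $f$ the operator norm). What you actually use, $\Tr(A\nabla^2V_t)\le\Lambda_V$ for $A\succeq0$ in $\de f(0)$, is preserved, so set up the approximation with that in mind.
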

\renewcommand{\thetheorem}{\thesection.\arabic{theorem}}

The previous bound is sharp as can be seen by taking $$\mu=\mathcal{N}(0, \alpha^2\Id),\quad \nu=\mathcal{N}(0,\beta^2\Id);\quad\Lambda_V=\frac{f(\Id)}{\alpha^2},\quad 1/\lambda_W=\beta^2f(\Id)$$
as in this case $\nabla\phi(x)=\frac{\beta x}\alpha$, which yields $f(\nabla^2\phi)=\frac{\beta}\alpha f(\Id)$.

\begin{remark*}
	By the characterization of convex unitarily invariant functions on $\mathbb{S}_d$ \cite{davis1957all}, if ${g\colon\R^d\to\R}$ is convex and symmetric, then $f(X)\coloneq g(\lambda(X))$ is convex, which lets us build some interesting examples of \emph{good} functions: denote with $\lambda_1\geq\cdots\geq\lambda_d$ the eigenvalues of $X$ and with $\lambda^+$ their positive parts, then
	$$S_k(X)\coloneq \lambda_1+\cdots+\lambda_k,\quad H_k(X)^{1/k}\coloneq\bigg(\prod_{1\leq i_1\leq\cdots\leq i_k\leq d}\lambda_{i_1}^+\cdots\lambda_{i_k}^+\bigg)^{1/k},\quad N_p(X)\coloneq\|\lambda^+\|_p\;\;p\geq1$$
	are \emph{good}, as well as all norms increasing on $\mathbb{S}_d^+$. In particular for $S_1=\lambda_{max}$ and $S_d=\Tr$ we recover Caffarelli's and Gozlan-Sylvestre's theorems respectively. Although most useful estimates involve unitarily invariant functions, we don't require this property.
\end{remark*}

\begin{acknowledgments}
	The author would like to thank Guido De Philippis for the helpful conversations and supervision.
\end{acknowledgments}

\section{The divergence of the Brenier map}\label{sec:2}
The first result in the direction of Laplacians is due to De Philippis and Shenfeld \cite{de2024optimal}, where the same $L^p$ approach of Kolesnikov \cite{kolesnikov2011mass} was used to prove Theorem \ref{lapl} under the stronger assumption $\nabla^2W\succeq d\lambda_W$; this has been weakened by Gozlan and Sylvestre \cite{gozlan2025global} using a different method based on entropic regularisation.
In this section we give a different proof of Theorem \ref{lapl}, following a strategy analogous to Caffarelli's: first we show how the desired estimate follows formally from a maximum principle argument applied to the Monge-Ampère equation, then we replicate the procedure in a rigorous way using incremental quotient approximations to the Laplacian.
\begin{theorem}\label{lapl}
	Let $d\mu=e^{-V}dx,\;d\nu=e^{-W}dx$ be probability measures on $\R^d$ with $W$ convex and $V,W,W^*\in C^{1,1}_{\rm{loc}}(\R^d)$. Assume that there are constants $\Lambda_V,\lambda_W>0$ such that
	\begin{equation*}
		\Delta V\leq\Lambda_V,\quad\Delta W^*\leq 1/\lambda_W\quad\text{a.e. in }\R^d.
	\end{equation*}
	Let $T=\nabla\phi$ be the Brenier map transporting $\mu$ to $\nu$. Then,
	\begin{equation}\label{lapl_target}
		\|\Delta\phi\|_{L^\infty}\leq\sqrt{\Lambda_V/\lambda_W}.
	\end{equation}
\end{theorem}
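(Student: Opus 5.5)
Following Caffarelli's strategy, I will run a maximum principle on the Monge--Ampère equation, first formally and then rigorously. The equation is
\[
e^{-V(x)}=e^{-W(\nabla\phi(x))}\det\nabla^2\phi(x).
\]
Taking logarithms gives
\[
\log\det\nabla^2\phi=W(\nabla\phi)-V .
\]
Since $\nabla\phi^*=(\nabla\phi)^{-1}$, it is convenient to also use the dual relation $W(\nabla\phi(x))=\langle x,\nabla\phi(x)\rangle-\phi(x)+\dots$. The natural approach, though, is to keep $W^*$ in play. Write $y=\nabla\phi(x)$. Then $\nabla W^*(\nabla W(y))=y$, so the convexity hypothesis enters through $\nabla^2 W^*$. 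Assuming for the moment that $\phi$ is smooth and that $\Delta\phi$ attains its maximum at $x_0$, I will differentiate the log equation twice in each direction $e_k$ and sum over $k$. Concavity of $\log\det$ gives
\[
\sum_k \partial_{kk}\log\det\nabla^2\phi\le \Tr\big((\nabla^2\phi)^{-1}\nabla^2\Delta\phi\big)\le 0
\]
at $x_0$. The right-hand side is
\[
\sum_k\Big[\langle\nabla^2W(y)\nabla^2\phi\, e_k,\nabla^2\phi\, e_k\rangle+\langle\nabla W(y),\nabla\partial_{kk}\phi\rangle\Big]-\Delta V .
\]
At the maximum $\nabla\Delta\phi=0$, so the middle term vanishes. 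This yields
\[
\Tr\big(\nabla^2\phi\,\nabla^2W(y)\,\nabla^2\phi\big)\le\Lambda_V .
\]

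Next I convert this to a lower bound involving $\Delta\phi$. Put $A=\nabla^2\phi\succ0$ and $B=\nabla^2W(y)=(\nabla^2W^*(\nabla W(y)))^{-1}$. By Cauchy--Schwarz with respect to $B$,
\[
(\Tr A)^2=\big(\Tr(B^{1/2}A\cdot B^{-1/2})\big)^2\le\Tr(ABA)\,\Tr(B^{-1}).
\]
The hypothesis gives $\Tr(B^{-1})=\Delta W^*\le1/\lambda_W$, hence
\[
(\Delta\phi(x_0))^2\le\Lambda_V/\lambda_W,
\]
which is \eqref{lapl_target}. This is exactly where the weaker assumption on $W^*$, rather than on $\nabla^2 W$, suffices.

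The rigorous version has two issues: $\phi$ is only known to be $C^{1,1}_{\rm loc}$ or less, and the maximum may not be attained on $\R^d$. For regularity, I will replace $\Delta\phi$ by the incremental quotient
\[
\Delta_h\phi(x)=\sum_k\big[\phi(x+he_k)+\phi(x-he_k)-2\phi(x)\big]/h^2,
\]
and use the concavity of $\log\det$ in its discrete form. The relevant inequality is $\log\det$ evaluated at an average of Hessians dominating the average of $\log\det$, in the style of Caffarelli's second-difference argument, with Alexandrov/viscosity interpretation at a maximum point. On the right-hand side I will need a discrete analogue of the bound $\sum_k [W(\nabla\phi(x\pm he_k))]$. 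Convexity of $W$ together with $\nabla W^*$ being $C^{1,1}$ should give
\[
W(z+a)+W(z-a)-2W(z)\ge \frac{|a|^2_{\ast}}{\cdots},
\]
and the Cauchy--Schwarz step then runs on the difference vectors $a_k=\nabla\phi(x+he_k)-\nabla\phi(x)$. For the maximum issue, I will first treat compactly supported or uniformly convex perturbations, e.g.\ adding $\varepsilon|x|^2$ penalizations, or work with $\Delta_h\phi-\varepsilon|x|^2$, or approximate $\mu,\nu$ by measures for which the max is attained. I will then pass to the limit using stability of Brenier maps.

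The main obstacle is the discrete step. I need the second difference of $W\circ\nabla\phi$ to dominate
\[
\frac{\big(\sum_k|\delta_k\nabla\phi|\text{-type quantity}\big)^2}{\Delta W^*}
\]
using only the upper bound on $\Delta W^*$ and no lower Hessian bound on $W$. My first attempt will be to use the Fenchel--Young inequality
\[
W(z')-W(z)-\langle\nabla W(z),z'-z\rangle\ge\frac{1}{2}\frac{|\cdots|^2}{\cdots},
\]
expressed through the dual Bregman divergence of $W^*$, which is bounded above by $\frac12\langle\nabla^2W^*\cdot,\cdot\rangle$ integrated along segments. Averaging over the $2d$ directions should then produce the trace $\Delta W^*$.
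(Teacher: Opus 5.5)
Your formal computation coincides with the paper's informal proof. The rigorous part, however, has a genuine gap, and it begins with the choice of discrete operator. Coordinate second differences do not inherit the hypothesis on $V$. A full Hessian bound $\nabla^2V\preceq\Lambda_V\Id$ (Caffarelli's setting) passes to every directional difference, but a bound on $\Delta V$ alone does not. For instance, $V=x_1^4-6x_1^2x_2^2+x_2^4$ is harmonic on $\R^2$, yet $\sum_k\delta^2_{he_k}V\equiv4h^4>0$. In general the discrepancy with $h^2\Delta V$ is of order $h^4\sum_k\partial_k^4V$, which the hypotheses do not control, least of all at a maximizer that may drift to infinity.

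The missing idea is the spherical mean $\Delta_\e f(x)=\fint_{\mathbb{S}^{d-1}}\big(f(x+\e y)-f(x)\big)\,dy$. For it, $\Delta V\le\Lambda_V$ gives $\Delta_\e V\le\frac{\Lambda_V}{d}\frac{\e^2}{2}$ for all $\e$ and $x$ (Lemma~\ref{lapl_lemma}), and $\fint_{\mathbb{S}^{d-1}}\langle y,X^{-1}y\rangle\,dy=\Tr(X^{-1})/d$ still feeds your Cauchy--Schwarz step.

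The same operator is needed for a second issue your plan does not address. Cauchy--Schwarz controls the first differences $\langle\nabla\phi(\bar x+\e y)-\nabla\phi(\bar x-\e y),y\rangle$. Converting these into second differences of $\phi$ by convexity loses a factor $2$ (exactly so for quadratic $\phi$), so the scheme alone gives only $\|\Delta\phi\|_{L^\infty}\le2\sqrt{\Lambda_V/\lambda_W}$. The paper removes this factor by the bootstrap $a_{n+1}=2-1/a_n\downarrow1$. It feeds the previous bound on $\Delta\phi$ back in for $t<\e/a$ through $\fint_{\mathbb{S}^{d-1}}\langle\delta_{ty}\nabla\phi(\bar x),y\rangle\,dy=\frac td\fint_{B_1}\Delta\phi(\bar x+tz)\,dz$. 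The coordinate analogue $\sum_k\int_0^t\partial_{kk}\phi(\bar x+se_k)\,ds$ is not controlled by $\sup\Delta\phi$.

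For the $W$-term, your Fenchel--Young instinct is sound, but as you set it up (coordinate directions, Hessians of $W^*$ integrated along each segment) it yields $\sum_k\langle e_k,\nabla^2W^*(p_k)e_k\rangle$ at direction-dependent points $p_k$. That sum is bounded only by $d/\lambda_W$, a loss of a factor $d$. The paper localizes instead:
\begin{itemize}
\item it approximates so that $W$ is smooth on $\overline{B_R}$ and $+\infty$ outside (Lemma~\ref{lemma_approx});
\item it Taylor-expands at $\nabla\phi(\bar x)$ with the single matrix $X=\nabla^2W(\nabla\phi(\bar x))$;
\item it absorbs the cubic remainder using $\nabla^2W\succeq\lambda_W\Id$, which, contrary to your remark, does follow from $\Delta W^*\le1/\lambda_W$, since $0\preceq\nabla^2W^*\preceq\Tr(\nabla^2W^*)\Id$;
\item it uses that $\delta_{\e y}\nabla\phi(\bar x)$ is small, uniformly even if $|\bar x|\to\infty$, by Caffarelli's lemma $T(x)\to Rx/|x|$ (Lemma~\ref{finite_max}).
\end{itemize}

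In fact your idea closes with no localization once paired with the spherical mean. Take $z=\nabla\phi(\bar x)$, $w_0=\nabla W(z)$ and any $b$. Fenchel--Young gives $W(z')-W(z)-\langle w_0,z'-z\rangle\ge\langle b,z'-z\rangle-\big(W^*(w_0+b)-W^*(w_0)-\langle z,b\rangle\big)$. With $z'=\nabla\phi(\bar x+\e y)$ and $b=\theta y$, averaging over $y$ turns the left side into $\Delta_\e(W\circ\nabla\phi)(\bar x)$, because the linear term vanishes at the maximum. It turns the last term into $\Delta_\theta W^*(w_0)\le\frac{\theta^2}{2d\lambda_W}$, and optimizing in $\theta$ yields \eqref{ineq_2} with $\eta=0$.

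Lastly, your proposal leaves open the existence of a maximizer. The paper obtains it from the same truncation of the target, since Lemma~\ref{finite_max} gives $0\le\Delta_\e\phi\to0$ at infinity. Your penalization $\Delta_h\phi-\varepsilon|x|^2$ would instead shift the optimality conditions by $2\varepsilon\bar x$ and $2\varepsilon\Id$. This creates terms like $\varepsilon\langle\nabla W(\nabla\phi(\bar x)),\bar x\rangle$ and $\varepsilon\Tr\big((\nabla^2\phi(\bar x))^{-1}\big)$ that you do not control.
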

\begin{proof}[Proof (informal)]
	Assume $V,W,\phi$ are smooth, then $\phi$ satisfies the (log-)Monge-Ampère equation $$V=W(\nabla\phi)-\log\det\nabla^2\phi.$$
	Differentiating two times in direction $e_i$ yields
	\begin{equation}
		\de_{ii}V=\langle\nabla W(\nabla\phi), \nabla \de_{ii}\phi\rangle + \langle \nabla^2W(\nabla\phi)\nabla\de_i\phi, \nabla\de_i\phi\rangle + \Tr\big[(H^{-1}\de_iH)^2\big]-\Tr(H^{-1}\de_{ii}H)
	\end{equation}
	where $H\coloneq\nabla^2\phi$. By summing over the basis $\{e_i\}$ we get
	\begin{equation}\label{laplV}
		\begin{aligned}
			\Delta V&=\langle\nabla W(\nabla\phi), \nabla \Delta\phi\rangle + \Tr(H\nabla^2WH) + \sum\nolimits_i\Tr\big[(H^{-1}\de_iH)^2\big]-\Tr(H^{-1}\nabla^2\Delta\phi)\\
			&\geq\langle\nabla W(\nabla\phi), \nabla \Delta\phi\rangle + \Tr(H\nabla^2W(\nabla\phi)H) - \Tr(H^{-1}\nabla^2\Delta\phi).
		\end{aligned}
	\end{equation}
	where we used $$\Tr\big[(H^{-1}\de_iH)^2\big]=\Tr\big[(H^{-1/2}\de_iHH^{-1/2})^T(H^{-1/2}\de_iHH^{-1/2})\big]\geq0.$$
	Assume further that $\Delta\phi$ attains a maximum at $x_0$, then the optimality conditions give ${\nabla\Delta\phi=0},\,{\nabla^2\Delta\phi\preceq0}$ at $x_0$. Hence \eqref{laplV} implies
	\begin{equation}\label{eq:formal_end}
		\Lambda_V\geq\Delta V\geq\Tr(H\nabla^2W(\nabla\phi)H)\geq\frac{\Tr(H)^2}{\Tr[\nabla^2W(\nabla\phi)^{-1}]}\geq\lambda_W(\Delta\phi)^2\quad\text{at }x_0
	\end{equation}
	where we used $\nabla^2W(x)^{-1}=\nabla^2W^*(y)$ and the Cauchy-Schwarz inequality for the Frobenius scalar product $\langle A,B\rangle\coloneq\Tr(B^TA)$:
	\begin{equation}\label{cs}
		\Tr(HXH)\Tr(X^{-1})=\langle HX^{1/2},HX^{1/2}\rangle\langle X^{-1/2},X^{-1/2}\rangle\geq\langle HX^{1/2},X^{-1/2}\rangle^2=\Tr(H)^2.
	\end{equation} 
	Rearranging \eqref{eq:formal_end} yields $$\|\Delta\phi\|_{L^\infty}=\Delta\phi(x_0)\leq\sqrt{\Lambda_V/\lambda_W}$$ which concludes the proof.
\end{proof}
While the regularity assumptions can be met by approximation, the main obstacle to this formal argument is the existence of a maximum for $\Delta\phi$. The following lemma due to Caffarelli (a short proof of which we include in the Appendix) motivates the use of $\e$-approximations to the Laplacian, an analogous tool to the incremental quotients originally used in his proof.
\begin{lemma}\label{finite_max}
	Let $d\mu = f(x)\,dx,\,d\nu = g(x)\,dx \in \mathcal{P}(\R^{d})$ with finite second moments and $\nabla \phi = T$ be the optimal transport taking $\mu$ to $\nu$.
	Assume that $\log f \in L^\infty_{\rm loc}(\R^d)$ and that $g$ is bounded away from zero in the ball $B_{j}$ for some $j > 0$ and vanishes outside $B_{j}$. 
	Then,
	$$
	T(x) \to j \frac{x}{|x|}\qquad \text{uniformly as }|x| \to\infty.
	$$
	In particular, for any fixed $\e > 0$, the function $\phi(x+\e y)+\phi(x-\e y)-2\phi(x) \to 0$ as $|x| \to \infty$ uniformly for $y\in\mathbb{S}^{d-1}$.
\end{lemma}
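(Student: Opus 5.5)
The plan is to exploit two facts. First, $T=\nabla\phi$ takes values in $\overline{B_j}$, since $\nu$ is concentrated there. Second, $\mu$ charges every open set, because $\log f\in L^\infty_{\rm loc}$. By Brenier's theorem, $\phi$ is convex and $\partial\phi(\R^d)\subset\overline{B_j}$ up to the usual closure. The key tool is cyclical monotonicity of the support of the optimal plan: for $\mu$-a.e.\ $x$ and every $z$,
\[
\langle T(x)-T(z),x-z\rangle\ge0.
\]
Since $\mu$ has positive density everywhere, we may regard $\phi$ as a convex function on all of $\R^d$ with $\nabla\phi\in\overline{B_j}$ wherever it exists.

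Fix a direction $e\in\mathbb{S}^{d-1}$ and set $\eta>0$. The target is that for $|x|$ large with $x/|x|$ close to $e$, we have $\langle T(x),e\rangle\ge j-\eta$; together with $|T|\le j$, this forces $T(x)$ to be close to $je$. To prove this, I would argue by contradiction. Suppose there are points $x_k$ with $|x_k|\to\infty$ and $x_k/|x_k|\to e$, but $\langle T(x_k),e\rangle\le j-\eta$. Since $g$ is bounded below on $B_j$, the set
\[
A=\{y\in B_j:\langle y,e\rangle>j-\eta/2\}
\]
has positive $\nu$-mass. Hence $T^{-1}(A)$ has positive $\mu$-mass, so it contains a point $z$ with $T(z)\in A$. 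Monotonicity then gives
\[
\langle T(z)-T(x_k),z-x_k\rangle\ge0.
\]
Dividing by $|x_k|$ and letting $k\to\infty$ yields $\langle T(z)-T(x_k),e\rangle\le o(1)$. However, $\langle T(z)-T(x_k),e\rangle\ge\eta/2$, which is a contradiction.

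To make the convergence uniform, I would run the same argument with a compactness over directions. Cover $\mathbb{S}^{d-1}$ by finitely many caps. For each cap, pick a fixed point $z$ whose image lies in the corresponding cap of $\overline{B_j}$ near the boundary; this is possible for every cap because $g\ge c>0$ on all of $B_j$. Since these are finitely many fixed $z$'s, the error terms $\langle\cdot,z\rangle/|x|$ tend to zero uniformly in $x$. This step also needs a slight geometric care: $\langle T(x),e'\rangle$ large for nearby directions $e'$ must be combined with $|T(x)|\le j$. I expect this uniformity bookkeeping, and the handling of a.e.\ defined $T$ (we can pass to subgradients by convexity, since $\partial\phi(x)$ is contained in the closed convex hull of limits of gradients), to be the main technical step.

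For the final claim, I would write
\[
\phi(x+\e y)+\phi(x-\e y)-2\phi(x)=\e\int_0^1\langle T(x+t\e y)-T(x-t\e y),y\rangle\,dt,
\]
which is valid since $\phi$ is Lipschitz (indeed $|\nabla\phi|\le j$). For $|x|\to\infty$, both points $x\pm t\e y$ have norm tending to infinity, and their directions differ from $x/|x|$ by $O(\e/|x|)$. Uniform convergence of $T$ to $jx/|x|$ then makes the integrand tend to zero uniformly in $y\in\mathbb{S}^{d-1}$ and $t\in[0,1]$.
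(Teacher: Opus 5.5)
Your proof is correct, and it is organized differently from the paper's. Both arguments use the same three inputs: $|T|\le j$, monotonicity of $\nabla\phi$, and the fact that $\nu$ charges every cap of $B_j$ touching $\partial B_j$.

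The paper argues from the point $x_0$. Monotonicity forces the preimage of the cone $\Gamma$ (vertex $T(x_0)$, axis $x_0$, half-angle $\pi/2-t$) to lie, up to a null set, outside $B_{|x_0|\tan t/2}$. Hence $\inf_{B_j}g\,\mathcal{L}^d(\Gamma\cap B_j)\le\mu(\R^d\setminus B_{|x_0|\tan t/2})$, and smallness of $\Gamma\cap B_j$ then pins $T(x_0)$ near $jx_0/|x_0|$. You instead fix one witness $z\in T^{-1}(A)$ and test monotonicity against the far points $x_k$.

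What each route buys:
\begin{itemize}
\item The paper's route is quantitative: the rate is controlled by the tail of $\mu$ and by $\inf_{B_j}g$. It is also uniform in the direction for free, since the bound depends only on $|x_0|$. However, it invokes the Cordero-Erausquin--Figalli theorem so that $T$ is continuous and defined at every point.
\item Your route only needs $\mu(T^{-1}(A))=\nu(A)>0$ and no regularity of $T$. Monotonicity holds for arbitrary subgradients, so you get the convergence for all of $\partial\phi(x)$, and you avoid the cone geometry. The cost is that your argument is qualitative.
\end{itemize}
The two are closer than they look: picking $z\in T^{-1}(A)\cap B_\rho$ with $\mu(\R^d\setminus B_\rho)<\nu(A)$ turns your estimate into the paper's quantitative one.

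Two small simplifications.
\begin{itemize}
\item The finite cover of $\mathbb{S}^{d-1}$ is unnecessary. If uniformity failed along some $x_k$, you could extract $x_k/|x_k|\to e$ with $|T(x_k)-je|\ge\eta'>0$. Then $|T|\le j$ gives $\langle T(x_k),e\rangle\le j-\eta'^2/(2j)$, which your sequential argument already excludes.
\item In the last step, read the integral formula with a selection of $\partial\phi$ along the segment, rather than justifying it by Lipschitz continuity alone. This is valid because a convex function of one variable is differentiable off a countable set, and your subgradient version of the convergence covers it.
\end{itemize}
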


We mention a regularity result due to Cordero-Erausquin and Figalli \cite{reg_figalli}:

\begin{theorem}\label{thm:fig-reg}
	Let $Y \subset \R^{d}$ be a convex open set, and $f : \R^d\to \R^{+}$ and $g : Y \to \R^{+}$ be probability densities locally bounded away from zero and infinity. Then for any set $X'\subset \subset \R^d$, the optimal transport map $T = \nabla\phi\colon\R^d \to Y$ between $f(x)\, dx$ and $g(y)\, dy$ is of class $C^{0,\alpha}(X')$ for some $\alpha > 0$.
	In addition, if $f \in C^{k,\beta}_{\rm{loc}}(\R^d)$ and $g \in C^{k,\beta}_{\rm{loc}}(Y)$ for some $k\ge0$ and $\beta \in (0,1)$, then $\phi \in C^{k+2,\beta}_{\rm{loc}}(\R^d)$.
\end{theorem}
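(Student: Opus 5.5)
Since the statement is a cited regularity theorem, my plan is to reduce it to Caffarelli's interior regularity theory for the Monge--Ampère equation. The convexity of $Y$ is what makes the Brenier potential an Alexandrov solution. Let $\phi$ be the Brenier potential, a convex function finite on $\R^d$ because $\mu$ has full support. The first step is to use the convexity of $Y$, following Caffarelli's argument, to show that the Monge--Ampère measure of $\phi$ satisfies
$$\det\nabla^2\phi=\frac{f}{g(\nabla\phi)}$$
in the Alexandrov sense on $\R^d$, and not only in the weaker Brenier sense. Concretely, I would show that $\partial\phi(\R^d)\subset\overline Y$ and that $|\partial\phi(E)|=\int_E f/g(\nabla\phi)\,dx$ for Borel sets $E$. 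Since $\partial Y$ is Lebesgue-null and $g>0$ on $Y$, the portion of $\partial\phi(E)$ lying outside $Y$ carries no mass, and this gives the identity.

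Next, I fix $X'\subset\subset X''\subset\subset\R^d$. Because $\phi$ is locally Lipschitz, the image $\partial\phi(X'')$ is a bounded subset of $\overline Y$. The aim is to obtain two-sided bounds $\lambda\le\det\nabla^2\phi\le\Lambda$ on $X''$ in the Alexandrov sense.

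The \textbf{main obstacle} is that $g$ is only locally bounded away from $0$ and $\infty$ inside $Y$. If $\partial\phi(X'')$ touches $\partial Y$, the ratio $f/g(\nabla\phi)$ could therefore degenerate. I would first prove that $\phi$ is strictly convex in $\R^d$, using Caffarelli's dichotomy: if $\phi$ agreed with a supporting affine function along a segment, that segment would have to extend to infinity. Then the mass balance between the doubling-type estimate on sections and the full support of $f$ would give a contradiction. The convexity of $Y$ enters here in the same way as in Caffarelli's boundary analysis.

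From strict convexity I would then argue that interior points are mapped into the interior. Suppose $\partial\phi(x_0)$ met $\partial Y$. Then small sections $S(x_0,t)$ would be mapped into a set contained in a half-space near a boundary point of $Y$. Comparing $|S(x_0,t)|\,|\partial\phi(S(x_0,t))|\approx$ const against $\mu(S)=\nu(\partial\phi(S))$ should then be impossible, though the exact comparison still has to be worked out. This is the step I expect to be delicate. Once it holds, $\partial\phi(\overline{X''})$ is a compact subset of $Y$, so $f/g(\nabla\phi)$ is pinched between two positive constants on $X''$.

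With pinched right-hand side and strict convexity in hand, Caffarelli's interior $C^{1,\alpha}$ theorem gives $\phi\in C^{1,\alpha}(X')$, that is, $T\in C^{0,\alpha}(X')$. For the higher-order statement, assume $f\in C^{k,\beta}_{\rm loc}$ and $g\in C^{k,\beta}_{\rm loc}(Y)$. Since $\nabla\phi\in C^{0,\alpha}$ takes values in a compact subset of $Y$, the right-hand side $f/g(\nabla\phi)$ is Hölder. Caffarelli's $C^{2,\beta}$ estimate then yields $\phi\in C^{2,\beta'}$, so the equation is uniformly elliptic. Differentiating it and applying Schauder theory to the linearized operator $\Tr\big((\nabla^2\phi)^{-1}\nabla^2\cdot\big)$, I bootstrap until $\phi\in C^{k+2,\beta}_{\rm loc}$.
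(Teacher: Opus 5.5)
The paper gives no proof of this statement; it is quoted from \cite{reg_figalli}. Your sketch therefore has to stand on its own, and it has a genuine gap exactly at the step you flag as delicate.

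Caffarelli's localization theorem says that a contact set $\{\phi=\ell\}$ which is not a point has no extremal points in the region. It requires $\lambda\le\det\nabla^2\phi\le\Lambda$ in a neighbourhood of the extremal point. With $g$ controlled only on compact subsets of $Y$, you have this only near points $x$ with $\partial\phi(x)\subset Y$. That is exactly what you intend to derive \emph{from} strict convexity, so the order of your argument is circular.

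Moreover, the bad configuration is precisely the one the dichotomy cannot see. Suppose $y_0\in\partial\phi(x_0)\cap\partial Y$ and $e$ is an outer normal to $Y$ at $y_0$. Then $\partial\phi(\R^d)\subset\overline Y$ gives $\phi(x_0+se)\le\phi(x_0)+s\langle y_0,e\rangle$ for $s\ge0$, and the subgradient inequality gives the reverse. Hence the contact set contains the half-line $x_0+\R_+e$, and its extremal point $x_0$ is exactly where pinching is lost. Your section comparison gives no mechanism there either: without bounds on $g$ near $y_0$, the identity $\nu(\partial\phi(S))=\mu(S)$ says nothing about $|\partial\phi(S)|$.

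In fact, under your reading of the hypotheses, both intermediate claims are false. Take the following data:
\begin{itemize}
\item $Y=\{y_2>0\}\subset\R^2$, and $m>0$ smooth and concave on $(0,\infty)$ with $m(s)=(1+s\log s)^{-1}$ for small $s$;
\item $\psi(y)=\frac12|y|^2+y_1^2/m(y_2)$ on $\overline Y$ (with $m(0)=1$), and $\psi=+\infty$ elsewhere;
\item $\phi=\psi^*$, $\mu$ the standard Gaussian, and $\nu=(\nabla\phi)_\#\mu$.
\end{itemize}
Since $(u,v)\mapsto u^2/v$ is convex and nonincreasing in $v$, $\psi$ is convex with $\nabla^2\psi\succeq\Id$ in $Y$. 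From $\psi\ge0$, $\psi(y_1,0)=\frac32y_1^2$ and $\psi(0,y_2)=\frac12y_2^2$ one gets $\partial\psi(0)=\{0\}\times(-\infty,0]=:L$. For $a\neq0$, the identity $\psi(a,s)-\psi(a,0)=\frac12s^2+a^2s\log s$ forces $\partial\psi(a,0)=\emptyset$.

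So the maximiser defining $\phi(x)$ lies in $Y$ unless $x\in L$, where it is $0$. Consequently $\nabla\psi$ is a diffeomorphism of $Y$ onto $\R^2\setminus L$ with inverse $\nabla\phi$. The measure $\nu$ has density $g=f(\nabla\psi)\det\nabla^2\psi$, which is smooth and positive on $Y$. All hypotheses hold, yet:
\begin{itemize}
\item $\phi$ is affine on $L$, so it is not strictly convex;
\item $\partial\phi(L)=\{0\}\subset\partial Y$, so interior points are not mapped into $Y$;
\item $\phi(0,t)=\frac12(t^+)^2$, so $\phi\notin C^2$ near the origin, and even the last assertion of the statement fails in this reading.
\end{itemize}

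What rescues your argument is the situation in which the paper actually applies the theorem. After Lemma~\ref{lemma_approx}, $g=e^{-W}$ is smooth and positive on $\overline{B_R}$, so $g$ is bounded above and below on $Y\cap B_r$ for every $r$. Then $\partial\phi(X'')$ lies in some $\overline Y\cap B_r$, which gives two-sided bounds on every bounded set. A nontrivial contact set then has no extremal points in $\R^d$, so it contains a line. This makes $\langle\nabla\phi,e\rangle$ constant and forces $\nu$ to charge a hyperplane, which is impossible. Strict convexity then yields $\partial\phi(x)\subset Y$ by the half-line observation above, and your $C^{1,\alpha}$, $C^{2,\alpha}$ and Schauder steps go through.
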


We will also recall the stability of optimal transport maps under approximation, as stated in \cite{colombo2015lipschitz}: let $\{f_j\}$ and $\{g_j\}$ be locally uniformly bounded probability densities such that $f_{j} \to f$ and $g_{j} \to g$ in $L^{1}_{\rm{loc}}$. Then, the associated potentials $\phi_{j} \to \phi$ locally uniformly and $\nabla\phi_{j} \to \nabla\phi$ in measure. This stability result is enough for our purpose, in particular:
\begin{equation*}
	\|\Delta\phi\|_{L^\infty}=\sup_{\substack{\psi\in C^\infty_c\\ \|\psi\|_{L^1}=1}}\int_{\R^d} \phi\Delta\psi dx=\sup_{\substack{\psi\in C^\infty_c\\ \|\psi\|_{L^1}=1}}\lim_{j\to\infty}\int_{\R^d} \phi_j\Delta\psi dx\leq\liminf_{j\to\infty}\|\Delta\phi_j\|_{L^\infty}.
\end{equation*}

\begin{lemma}\label{lemma_approx}
	By approximation, we may assume that $V$ is smooth and $V\geq C_1+C_2|x|^2$ for some $C_2>0$, that $W$ is equal to infinity outside the ball $\overline{B_R}$ and that $W$ is smooth in $\overline{B_R}$ for some $R>0$.
\end{lemma}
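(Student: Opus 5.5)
We construct approximating pairs $(V_j,W_j)$ that satisfy the hypotheses of Theorem \ref{lapl} with constants $\Lambda_{V}+o(1)$ and $1/\lambda_W+o(1)$, and that have the stated extra structure. Since the bound \eqref{lapl_target} is lower semicontinuous under the stability result recalled above (the $\liminf$ inequality for $\|\Delta\phi_j\|_{L^\infty}$), it then suffices to prove the theorem for the approximants and let $j\to\infty$. The approximation splits into one step for the source and one for the target.

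\emph{Source.} We replace $\mu$ by the convolution $\mu\ast\gamma_\delta$, where $\gamma_\delta$ is the centered Gaussian of variance $\delta$. Its potential is $V_\delta=-\log(e^{-V}\ast\gamma_\delta)$. This is smooth, and a standard computation (covariance formula) gives
\[\nabla^2V_\delta=\mathbb{E}[\nabla^2V]-\mathrm{Cov}(\nabla V),\]
with expectation and covariance taken under the tilted measure. Taking traces, $\Delta V_\delta\le\Lambda_V$. To obtain quadratic growth we then multiply the density by $e^{-\eta|x|^2}$ and renormalize. This adds $2\eta d$ to $\Delta V$, which disappears as $\eta\to0$, and it gives $V\ge C_1+C_2|x|^2$ with $C_2=\eta>0$, since $V_\delta$ is bounded below by a linear function thanks to the bound $\Delta V\le\Lambda_V$... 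Strictly speaking, we only need an upper bound on the density $e^{-V_\delta}$, which is bounded. The densities converge in $L^1$ and are locally uniformly bounded, so the stability result applies.

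\emph{Target.} This step is the delicate one, because the hypothesis is on $\Delta W^*$ and must survive truncation to the ball $B_R$. We first regularize $W$ by infimal convolution with the Gaussian in the dual variable: set $W_\delta^*=W^*\ast\gamma_\delta$. Convolution preserves the bound $\Delta W_\delta^*\le1/\lambda_W$ and convexity, and it makes $W^*$ smooth. Next we restrict to a ball. Take $W_R=W_\delta+\chi_{\overline{B_R}}+c_R$, with $c_R$ chosen to normalize. Then $W_R^*$ is the infimal convolution of $W_\delta^*$ with $R|\cdot|$, i.e.\ a sup-convolution-type operation. I would argue that $W_R^*$ agrees with $W_\delta^*$ on $\nabla W_\delta(B_R)$, and outside that set it is a convex function whose gradients lie in $\partial B_R$, hence degenerate in the radial direction. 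Its distributional Laplacian should therefore still be bounded by the supremum of $\Delta W_\delta^*$; this is the main point to check. If it fails, I would instead replace the indicator by a steep convex barrier $W_\delta+k\,(|y|-R)_+^2$, which does not lower $\nabla^2W$ and hence does not increase $\nabla^2W^*$, so that $\Delta W^*$ only decreases. I would then pass $k\to\infty$ through the stability result. The resulting $W$ is smooth inside $\overline{B_R}$ (possibly after restricting to a slightly smaller ball), the hypothesis $\Delta W^*\le 1/\lambda_W+o(1)$ holds, and $e^{-W}$ is bounded away from zero on $B_R$, as Lemma \ref{finite_max} requires.

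Finally, letting $R\to\infty$ and then $\delta,\eta\to0$, the densities converge in $L^1_{\rm loc}$ with local uniform bounds. The stability statement then transfers the uniform bound $\sqrt{(\Lambda_V+o(1))/(\lambda_W-o(1))}$ to the original $\phi$.
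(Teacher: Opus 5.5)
Your treatment of the source is fine: the $2\eta d$ loss in $\Lambda_V$ is harmless, and with a Gaussian kernel instead of a compactly supported one the covariance identity only needs its integrations by parts over $\R^d$ justified. Your dual-side regularization $W_\delta^*=W^*\ast\gamma_\delta$ is also fine, and it is a legitimate alternative to the paper's mollification of $W$ plus operator convexity of $A\mapsto A^{-1}$.

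The gap is the truncation, which is the real content of the lemma. You need $\Delta W_R^*\le 1/\lambda_W$ for $W_R=W_\delta+\chi_{\overline{B_R}}+c_R$, and you leave this as ``the main point to check''. Your heuristic does not prove it. Knowing that $\nabla^2W_R^*(y)$ annihilates the direction $\nabla W_R^*(y)\in\partial B_R$ says nothing about the trace of its tangential block, which is exactly what must be bounded. The fallback does not rescue the argument either. $W_\delta+k(|x|-R)_+^2$ is finite on all of $\R^d$, so its target has full support and Lemma \ref{finite_max} does not apply. Hence the maximum of $\Delta_\e\phi$ used in the proof of Theorem \ref{lapl} is unavailable for those problems. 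Sending $k\to\infty$ ``through the stability result'' would deduce the truncated case from the barrier case, which is the wrong direction.

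The missing step can be closed in either of two ways.

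\emph{Direct computation.} On the open set $\nabla W_\delta(B_R)$ you have $W_R^*=W_\delta^*-c_R$, and $\nabla W_\delta(\partial B_R)$ is Lebesgue-null. For $y\notin\nabla W_\delta(\overline{B_R})$, the maximizer $x$ of $\langle x,y\rangle-W_\delta(x)$ over $\overline{B_R}$ lies on $\partial B_R$, with $y=\nabla W_\delta(x)+\mu x$ and $\mu>0$. Implicit differentiation under the constraint $|x|=R$ gives, with $B=\nabla^2W_\delta(x)+\mu\Id$,
\[
\nabla^2W_R^*(y)=B^{-1}-\frac{B^{-1}x\otimes B^{-1}x}{\langle x,B^{-1}x\rangle}\preceq B^{-1}\preceq\nabla^2W_\delta(x)^{-1}=\nabla^2W_\delta^*(\nabla W_\delta(x)).
\]
Hence $\Delta W_R^*\le 1/\lambda_W$ a.e. Since $W_R$ is $\lambda_W$-strongly convex, $W_R^*\in C^{1,1}$, so no singular part can appear on the interface and this is the distributional bound.

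\emph{Barrier, used on the hypothesis.} Apply your barrier to the hypothesis on $W^*$ rather than to the transport problem. Set $W_k=W_\delta+k(|x|-R)_+^2+c_R$. For a.e. $y$, with $x=\nabla W_k^*(y)$, one has $\nabla^2W_k^*(y)=\nabla^2W_k(x)^{-1}\preceq\nabla^2W_\delta(x)^{-1}$. The comparison is at the same $x$, not the same $y$, which suffices to give $\Delta W_k^*\le 1/\lambda_W$. Since $W_k\uparrow W_R$, the conjugates $W_k^*$ decrease to $W_R^*$ locally uniformly, and the distributional bound passes to the limit.

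Note that the paper's own justification, $\nabla^2W_t^*(y)=(\nabla^2W\ast\eta_t)(x)^{-1}$, is literally valid only for $x$ in the open ball. At boundary points, what is needed is precisely the inequality displayed above.
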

\begin{proof}
	Let $\eta$ be a standard convolution kernel (radially symmetric, non-negative, compactly supported), and define $e^{-V_t}\coloneq e^{-V}\ast\eta_t$, then we obtain $$	V_t=-\log(e^{-V}\ast\eta_t),\quad\nabla V_t=\mathbb{E}[\nabla V]\quad\text{where }\quad\mathbb{E}[X](x)\coloneq\frac{(Xe^{-V})\ast\eta_t}{e^{-V}\ast\eta_t}(x),$$
	and by {Cauchy-Schwarz} inequality: $$\nabla^2 V_t=\mathbb{E}[\nabla^2 V]-\mathbb{E}[\nabla V\otimes\nabla V]+\mathbb{E}[\nabla V]\otimes\mathbb{E}[\nabla V]\preceq\mathbb{E}[\nabla^2 V]$$ which implies $\Delta V_t\leq\mathbb{E}[\Delta V]\leq\Lambda_V$. Taking $\supp\eta_t\subset B_{\delta_t}$ yields $$e^{-V_t(x)}\lesssim\int_{B(x,\delta_t)}e^{-V}dy\to0\text{ as }|x|\to\infty,$$ and in particular, $V_t$ is positive outside a bounded set.
	Clearly $(e^{-V_t})_{t\to0}$ satisfies the necessary stability conditions, therefore we may assume $V$ is smooth and positive outside a bounded set. Up to adjusting $V_t\coloneq (1-t)V+\frac{t\Lambda_V}d\frac{|x|^2}2+c_t$ we may also assume $V\geq C_1+C_2|x|^2$ for some $C_2>0$. More precisely,
	$$\Delta V_t=(1-t)\Delta V+t\Lambda_V\leq\Lambda_V,\quad e^{-(1-t)V-\frac{t\Lambda_V}d\frac{|x|^2}2}\to e^{-V}\text{ locally uniformly},$$
	and by dominated convergence, $$e^{c_t}=\int_{\R^d} e^{-(1-t)V-\frac{t\Lambda_V}d\frac{|x|^2}2}dx\to1$$
	where we used $(1-t)V+\frac{t\Lambda_V}d\frac{|x|^2}2\geq\min\{V, \frac{\Lambda_V}d\frac{|x|^2}2\}$, which together imply $e^{-V_t}\to e^{-V}$ locally uniformly.
	Next if we define $$W_t(x)\coloneq\begin{cases}
		(W\ast\eta_t)(x)+c_t & \text{ in } \overline{B_{1/t}}\\
		+\infty & \text{ in } \R^d\setminus \overline{B_{1/t}},
	\end{cases}$$
	we obtain, by Jensen inequality and the convexity of the map $\mathbb{S}_d^+\ni A\mapsto A^{-1}$ \cite[V.1.15]{bhatia2013matrix}
	$$\nabla^2W_t^*(y)=(\nabla^2W\ast\eta_t)(x)^{-1}\preceq((\nabla^2W)^{-1}\ast\eta_t)(x) \text{ for } x\in\partial W_t^*(y)$$ which yields $\Delta W_t^*\leq \Delta W^*\ast\eta_t\leq 1/\lambda_W$. The estimate $\Delta W^*\leq 1/\lambda_W$ also implies ${\nabla^2W\succeq \lambda_W\Id}$, therefore 
	$$W(x)\geq W(0)+\langle\nabla W(0),x\rangle+\frac{\lambda_W}2|x|^2\geq C_1'+C_2'|x|^2$$ for some $C_2'>0$. Similarly, for some $C_2''>0$ we have
	$$W\ast\eta_t(x)\geq\inf_{B(x,\delta_t)}W\geq C_1'+C_2'|x-\delta_t|^2\geq C_1''+C_2''|x|^2.$$
	Lastly, $e^{-W\ast\eta_t}\to e^{-W}\text{ locally uniformly}$, and by dominated convergence we have $$e^{c_t}=\int_{B_{1/t}}e^{-W\ast\eta_t}dx\to1,$$
	which together imply $e^{-W_t}\to e^{-W}\text{ locally uniformly}.$ 
\end{proof}

\begin{definition}
	Let $y\in\R^d,\,\e>0$, define the increments:
	\begin{gather*}
		(\delta_yf)(x)\coloneq f(x+y)-f(x),\quad(\delta^2_yf)(x)\coloneq f(x+y)+f(x-y)-2f(x),\\
		(\Delta_\e f)(x)\coloneq\avgint_{\mathbb{S}^{d-1}}(\delta_{\e y}f)(x)dy=\frac1{2}\avgint_{\mathbb{S}^{d-1}}(\delta^2_{\e y}f)(x)dy.
	\end{gather*}
\end{definition}
We will leave in the Appendix the proof of some properties of $\Delta_\e$, as shown in \cite{de2024optimal}:
\begin{lemma}\label{lapl_lemma}
	Let $\e>0$ and $f\colon\R^d\to\R$, denote with $\Delta f$ its distributional Laplacian. Then,
	$$\lim_{\e\to0}\frac{\Delta_\e f}{\e^2}=\frac{\Delta f}{2d}\text{ in the sense of distributions}.$$
	Further, if $\Delta f\leq\ell$, then $$\Delta_\e f\leq\frac{\ell}{d}\frac{\e^2}2,\quad\forall\e>0.$$
\end{lemma}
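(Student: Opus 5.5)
We must prove \Cref{lapl_lemma}: the distributional limit $\Delta_\e f/\e^2\to\Delta f/(2d)$, and the bound $\Delta_\e f\le \frac{\ell}{d}\frac{\e^2}{2}$ whenever $\Delta f\le \ell$. The plan is to test against smooth functions and move the finite differences onto the test function. Then use the mean value property of the sphere, in a quantitative form, to handle the inequality.

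\emph{Distributional limit.} Fix $\psi\in C^\infty_c(\R^d)$. Since translation is adjoint to the opposite translation and the sphere is symmetric under $y\mapsto -y$, Fubini gives
\begin{equation*}
\int_{\R^d}(\Delta_\e f)\psi\,dx=\int_{\R^d} f\,(\Delta_\e\psi)\,dx .
\end{equation*}
This is valid for $f\in L^1_{\rm loc}$, which is the setting where a distributional Laplacian makes sense. Taylor-expand $\psi$ to second order with a uniform remainder: $\delta^2_{\e y}\psi(x)=\e^2\langle\nabla^2\psi(x)y,y\rangle+O(\e^3)$, uniformly in $x$ and supported in a fixed compact set. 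Averaging over $\mathbb{S}^{d-1}$ uses $\avgint_{\mathbb{S}^{d-1}}y\otimes y\,dy=\Id/d$. This yields $\Delta_\e\psi/\e^2\to \Delta\psi/(2d)$ uniformly with uniformly bounded supports. Passing to the limit in $\int f\,\Delta_\e\psi/\e^2$ then gives the first claim.

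\emph{Inequality.} Set $g(x)\coloneq f(x)-\frac{\ell}{2d}|x|^2$. Then $\Delta g\le0$ in distributions, so $g$ is superharmonic. A direct computation gives
\begin{equation*}
\Delta_\e\Big(\tfrac{\ell}{2d}|x|^2\Big)=\tfrac{\ell}{2d}\avgint_{\mathbb{S}^{d-1}}\big(|x+\e y|^2-|x|^2\big)dy=\tfrac{\ell}{2d}\,\e^2 ,
\end{equation*}
because the linear term averages to zero and $|y|=1$. By linearity of $\Delta_\e$ it therefore suffices to show $\Delta_\e g\le0$, i.e.\ the spherical super-mean value inequality $\avgint_{\mathbb{S}^{d-1}}g(x+\e y)\,dy\le g(x)$.

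This last point is the main obstacle, because $f$ is only a distribution-level object and $\Delta_\e g$ is not obviously defined pointwise. I would mollify: $g_\delta\coloneq g\ast\eta_\delta$ is smooth with $\Delta g_\delta\le0$. For smooth superharmonic functions the classical argument applies. The map $r\mapsto\avgint_{\mathbb{S}^{d-1}}g_\delta(x+ry)\,dy$ has derivative $\frac{r}{d}\avgint_{B_r}\Delta g_\delta\le0$ by the divergence theorem, so it is nonincreasing and its value at $r=\e$ is at most $g_\delta(x)$. Hence $\Delta_\e g_\delta\le0$ pointwise, and the bound $\Delta_\e f_\delta\le\frac{\ell}{d}\frac{\e^2}{2}$ holds for every $\delta$.

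Finally let $\delta\to0$. We have $f_\delta\to f$ in $L^1_{\rm loc}$, so $\Delta_\e f_\delta\to\Delta_\e f$ in $L^1_{\rm loc}$, and the inequality passes to the limit a.e. In the paper's application $f$ is the convex, hence continuous, potential $\phi$. There $f_\delta\to f$ locally uniformly, so the inequality holds everywhere, which is what the maximum-principle argument needs.
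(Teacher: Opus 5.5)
Your proof is correct and follows the paper's route. For the limit, both arguments use duality, a second-order Taylor expansion and $\avgint_{\mathbb{S}^{d-1}} y\otimes y\,dy=\Id/d$; for the inequality, both rest on $\frac{d}{dr}\avgint_{\mathbb{S}^{d-1}} f(x+ry)\,dy=\frac{r}{d}\avgint_{B_1}\Delta f(x+rz)\,dz$. Subtracting $\frac{\ell}{2d}|x|^2$ only repackages the paper's integration of $\ell r/d$ over $[0,\e]$, and your mollification step supplies the justification the paper leaves implicit for nonsmooth $f$. One small correction to your closing remark: the paper applies the inequality to the smoothed $V$ in the Monge--Amp\`ere step, not to $\phi$, and uses $\phi$ only through the distributional limit.
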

With this tool we are ready for the proof of Theorem \ref{lapl}:
\begin{proof}
	By Lemma \ref{lemma_approx} and regularity of the optimal transport, the map $T=\nabla\phi$ is smooth and satisfies the (log-)Monge-Ampère equation $$V=W(\nabla\phi)-\log\det H.$$
	Taking the increments $2\Delta_\e$ on both sides yields 
	\begin{equation}\label{delta_monge}
		\frac{\e^2}d\Lambda_V\geq2\Delta_\e V=2\Delta_\e(W(\nabla\phi))-2\Delta_\e(\log\det H).
	\end{equation}
	By Lemma \ref{finite_max}, $0\leq\Delta_\e\phi(x)\to0$ uniformly as $|x|\to\infty$, therefore $\Delta_\e\phi$ attains a global maximum at some $\x\in\R^d$ where the optimality conditions read $\Delta_\e\nabla\phi(\x)=0$, $\Delta_\e H(\x)\preceq0$.
	We estimate each term in the right side: the concavity of $\log\det$ implies
	\begin{equation}\label{log_concav}
		\begin{gathered}
			\delta_{\e y}(\log\det H)(\x)\leq\langle H(\x)^{-1}, \delta_{\e y}H(\x)\rangle,\\
			\Delta_\e(\log\det H)(\x)\leq\langle H(\x)^{-1},\Delta_\e H(\x)\rangle\leq 0.
		\end{gathered}
	\end{equation}
	The following lemma deals with the term $W(\nabla\phi)$:
	\begin{lemma}
		With the previous notation, $\forall\eta>0,\,\exists\overline{\e}>0$ such that for all $0<\e\leq\overline{\e}$, the following estimate holds:
		$$2\Delta_\e (W(\nabla\phi))(\x)\geq(1-\eta)\avgint_{\mathbb{S}^{d-1}}\langle\delta_{\e y}\nabla\phi(\x),\nabla^2W(\nabla\phi(\x))\delta_{\e y}\nabla\phi(\x)\rangle dy.$$
	\end{lemma}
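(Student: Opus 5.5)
We expand $W$ to second order along each increment, then use the optimality condition at $\x$ to cancel the first-order term and uniform continuity of $\nabla^2W$ to control the error.

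Fix $y\in\mathbb{S}^{d-1}$ and write $p\coloneq\nabla\phi(\x)$ and $v_y\coloneq\delta_{\e y}\nabla\phi(\x)$. Taylor's formula with integral remainder gives
\begin{equation*}
\delta_{\e y}(W(\nabla\phi))(\x)=W(p+v_y)-W(p)=\langle\nabla W(p),v_y\rangle+\int_0^1(1-s)\langle\nabla^2W(p+sv_y)v_y,v_y\rangle\,ds .
\end{equation*}
Averaging over $y$ and using $\Delta_\e f=\avgint\delta_{\e y}f$, the first-order term becomes $\langle\nabla W(p),\Delta_\e\nabla\phi(\x)\rangle$. This vanishes by the optimality condition $\Delta_\e\nabla\phi(\x)=0$ at the maximum point. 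Hence
\begin{equation*}
2\Delta_\e(W(\nabla\phi))(\x)=\avgint_{\mathbb{S}^{d-1}}2\int_0^1(1-s)\langle\nabla^2W(p+sv_y)v_y,v_y\rangle\,ds\,dy .
\end{equation*}
Since $2\int_0^1(1-s)\,ds=1$, it remains to show that $\nabla^2W(p+sv_y)\succeq(1-\eta)\nabla^2W(p)$ for all $s\in[0,1]$ and $y\in\mathbb{S}^{d-1}$, provided $\e$ is small.

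By Lemma \ref{lemma_approx}, $W$ is smooth on the compact set $\overline{B_R}$, so $\nabla^2W$ is uniformly continuous there. Moreover $\nabla^2W\succeq\lambda_W\Id$, as noted in the proof of Lemma \ref{lemma_approx}. Also $p$ and $p+v_y=\nabla\phi(\x+\e y)$ both lie in $\overline{B_R}$, the closed convex support of $\nu$, so the whole segment between them lies in $\overline{B_R}$. The relative bound then follows from the additive bound
\begin{equation*}
\|\nabla^2W(q)-\nabla^2W(p)\|\le\eta\lambda_W\qquad\text{whenever }|q-p|\le\rho,
\end{equation*}
which holds for some $\rho=\rho(\eta)>0$ by uniform continuity, because $\eta\lambda_W\Id\preceq\eta\nabla^2W(p)$. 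We therefore need $\sup_y|v_y|\le\rho$.

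The main obstacle is that $\x$ depends on $\e$, so we need a bound $|\nabla\phi(\x+\e y)-\nabla\phi(\x)|\le\rho$ that is uniform in the location $\x$. Local smoothness of $\phi$ is not enough if $\x$ escapes to infinity as $\e\to0$. I would use uniform continuity of $\nabla\phi$ on all of $\R^d$, which comes from combining two facts.
\begin{itemize}
\item By Lemma \ref{finite_max}, $\nabla\phi(x)\to R\,x/|x|$ uniformly as $|x|\to\infty$. The map $x\mapsto Rx/|x|$ is uniformly continuous outside a large ball $B_M$, so increments of $\nabla\phi$ over distances $\e\le1$ are small outside $B_M$ once $M$ is large.
\item On the compact set $\overline{B_{M+1}}$, $\nabla\phi$ is smooth by Theorem \ref{thm:fig-reg}, hence uniformly continuous there.
\end{itemize}
Together these give a modulus $\omega$ of continuity for $\nabla\phi$ on $\R^d$. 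Choosing $\overline{\e}$ with $\omega(\overline{\e})\le\rho$ makes $|v_y|\le\rho$ for every $\e\le\overline{\e}$ and every $y$, regardless of where $\x$ lies, and the lemma follows.
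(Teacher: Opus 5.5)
Your proof is correct and follows essentially the paper's argument: you Taylor-expand $W$ around $\nabla\phi(\x)$, kill the first-order term with the optimality condition $\Delta_\e\nabla\phi(\x)=0$, and control the second-order error relative to $\nabla^2W(\nabla\phi(\x))\succeq\lambda_W\Id$. The smallness of $|\delta_{\e y}\nabla\phi(\x)|$, uniform in where $\x$ sits, comes from local smoothness of $\phi$ together with Lemma \ref{finite_max}. The differences are only technical: you use the integral remainder with uniform continuity of $\nabla^2W$ on $\overline{B_R}$ where the paper uses a cubic remainder bound via bounded $\nabla^3W$, and a global modulus of continuity for $\nabla\phi$ where the paper splits into bounded and unbounded subsequences of $\x$. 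Your global modulus handles the ``for all $0<\e\le\overline{\e}$'' quantifier more cleanly.
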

	\begin{proof}
		We can write
		\begin{equation}\label{remainder}
			2\delta_{\e y}(W(\nabla\phi))(\x)=2\langle\nabla W(\nabla\phi(\x)),\delta_{\e 		y}\nabla\phi(\x)\rangle+\langle\delta_{\e y}\nabla\phi(\x),X\delta_{\e y}\nabla\phi(\x)\rangle+R_{\e}(y)
		\end{equation}
		where $X\coloneq\nabla^2W(\nabla\phi(\x))$ and estimate the remainder $|R_\e(y)|\leq K|\delta_{\e y}\nabla\phi(\x)|^3$ as $\nabla^3W$ is bounded. Now if our point of maximum $\x$ stays in a bounded ball as $\e\to0$, then $|\delta_{\e y}\nabla\phi(\x)|{\lesssim\e}$.
		If instead (up to subsequence) $|\x|\to\infty$, then by Lemma \ref{finite_max} ${|\delta_{\e y}\nabla\phi(\x)|\to0}$ uniformly for $y\in\mathbb{S}^{d-1}$. Either way $\forall\eta>0$, for $\e$ small enough we obtain $$|R_\e(y)|\leq K|\delta_{\e y}\nabla\phi(\x)|^3\leq K\eta|\delta_{\e y}\nabla\phi(\x)|^2\leq\frac{K\eta}{\lambda_W}\langle\delta_{\e y}\nabla\phi(\x),X\delta_{\e y}\nabla\phi(\x)\rangle.$$
		Up to renormalizing $\eta$ we can integrate \eqref{remainder} to obtain the desired estimate
		\begin{equation}\label{eq:deltaW}
			2\Delta_\e (W(\nabla\phi))(\x)\geq(1-\eta)\avgint_{\mathbb{S}^{d-1}}\langle\delta_{\e y}\nabla\phi(\x),X\delta_{\e y}\nabla\phi(\x)\rangle dy.
		\end{equation}
	\end{proof}
	Combining \eqref{delta_monge}, \eqref{log_concav} and \eqref{eq:deltaW} yields
	\begin{equation}\label{ineq_1}
		\frac{\e^2}d\frac{\Lambda_V}{1-\eta}\geq\avgint_{\mathbb{S}^{d-1}}\langle\delta_{\e y}\nabla\phi(\x)X\delta_{\e y}\nabla\phi(\x)\rangle dy.
	\end{equation}
	Now define $F(y)\coloneq X^{1/2}\delta_{\e y}\nabla\phi(\x)$ and $G(y)\coloneq X^{-1/2}y$, then we have the following {Cauchy-Schwarz} inequality analogous to \eqref{cs}:
	\begin{equation}\label{discrete_cs}
		\begin{aligned}
			\bigg(\avgint_{\mathbb{S}^{d-1}}\langle\delta_{\e y}\nabla\phi(\x),y\rangle dy\bigg)^2&=	\bigg(\avgint_{\mathbb{S}^{d-1}}\langle F(y),G(y)\rangle dy\bigg)^2\leq\avgint_{\mathbb{S}^{d-1}}|F|^2 dy\avgint_{\mathbb{S}^{d-1}}|G|^2 dy\\
			&=\avgint_{\mathbb{S}^{d-1}}\langle \delta_{\e y}\nabla\phi(\x),X\delta_{\e y}\nabla\phi(\x)\rangle dy\avgint_{\mathbb{S}^{d-1}}\langle y,X^{-1}y\rangle dy.
		\end{aligned}
	\end{equation}
	By combining \eqref{ineq_1}, \eqref{discrete_cs} and using $$\avgint_{\mathbb{S}^{d-1}}\langle y,X^{-1}y\rangle dy=\frac{\Tr(X^{-1})}{d}\leq\frac{1}{d\lambda_W}$$
	we obtain the estimate:
	\begin{equation}\label{ineq_2}
		\avgint_{\mathbb{S}^{d-1}}\langle\delta_{\e y}\nabla\phi(\x),y\rangle dy\leq\frac{\e}{d}\frac{C}{\sqrt{1-\eta}}\quad\text{where }C\coloneq\sqrt{\Lambda_V/\lambda_W}.
	\end{equation}
	To relate this quantity to $\Delta_\e\phi(\x)$, notice that the convexity of $\phi$ yields
	$$\phi(\x)\geq\phi(\x\pm\e y)\mp\e\langle\nabla\phi(\x\pm\e y),y\rangle$$ and $$\delta^2_{\e y}\phi(\x)\leq\e\langle\nabla\phi(\x+\e y)-\nabla\phi(\x-\e y),y\rangle,$$
	which integrated gives
	\begin{equation}\label{ineq_3}
		\Delta_{\e}\phi(\x)\leq\frac{\e}2\avgint_{\mathbb{S}^{d-1}}\langle\nabla\phi(\x+\e y)-\nabla\phi(\x-\e y),y\rangle dy=\e\avgint_{\mathbb{S}^{d-1}}\langle\delta_{\e y}\nabla\phi(\x),y\rangle dy.
	\end{equation}
	In particular, from \eqref{ineq_2} and \eqref{ineq_3} we have
	\begin{equation}\label{2C}
		\Delta_\e\phi(\x)\leq\frac{\e^2}{2d}\frac{2C}{\sqrt{1-\eta}}\implies\|\Delta\phi\|_{L^\infty}\leq\frac{2C}{\sqrt{1-\eta}}.
	\end{equation}
	This first estimate is the desired one up to a factor of 2; to remove this factor, we use a bootstrap argument similar to Caffarelli's. Suppose we have obtained $\Delta\phi\leq\frac{aC}{\sqrt{1-\eta}}$ for some $1<a\leq2$, then
	$$\delta^2_{\e y}\phi(\x)=\int_0^\e\langle\nabla\phi(\x+ty)-\nabla\phi(\x-ty),y\rangle dt\implies\Delta_{\e}\phi(\x)=\avgint_{\mathbb{S}^{d-1}}\int_0^\e\langle\delta_{ty}\nabla\phi(\x),y\rangle dtdy.$$
	By the convexity of $\phi$, we have $\langle\delta_{ty}\nabla\phi(\x),y\rangle\leq\langle\delta_{\e y}\nabla\phi(\x),y\rangle$ for all $0\leq t\leq\e$ thus
	$$\avgint_{\mathbb{S}^{d-1}}\langle\delta_{ty}\nabla\phi(\x),y\rangle dy\leq\avgint_{\mathbb{S}^{d-1}}\langle\delta_{\e y}\nabla\phi(\x),y\rangle dy\leq\frac{\e}{d}\frac{C}{\sqrt{1-\eta}}\quad\text{by }\eqref{ineq_2}$$
	but also $$\avgint_{\mathbb{S}^{d-1}}\langle\delta_{ty}\nabla\phi(\x),y\rangle dy=\frac{t}{d}\avgint_{B_1}\Delta\phi(\x+tz)dz\leq\frac{t}{d}\frac{aC}{\sqrt{1-\eta}}.$$
	Using the first upper bound for $t\geq\e/a$ and the second for $t<\e/a$ we derive
	\begin{equation*}
		\Delta_\e\phi(\x)\leq\frac{C}{d\sqrt{1-\eta}}\bigg[\int_0^{\e/a}at dt+\int_{\e/a}^\e \e dt\bigg]=\frac{\e^2}{2d}\frac{C(2-1/a)}{\sqrt{1-\eta}}
	\end{equation*}
	and by Lemma \ref{lapl_lemma}, $$\|\Delta\phi\|_{L^\infty}\leq\frac{C(2-1/a)}{\sqrt{1-\eta}}.$$
	Starting from $a_0\coloneq2$ and repeating the same procedure with $a_{n+1}\coloneq 2-1/{a_n}$ an infinite number of times, then letting $\eta\to0$ we conclude the proof of \eqref{lapl_target} as $a_n$ decreases to 1.
\end{proof}
\section{Proof of Theorem A}\label{sec:3}
Both Caffarelli's and Gozlan-Sylvestre's theorems are examples of how upper bounds for $f(\nabla^2V)$ and $f(\nabla^2W^*)$ provide upper bounds for $f(\nabla^2\phi)$, for some estimating function $f$ equal to $\lambda_{max},\Tr$ respectively. In this section we use elementary convex analysis to generalize this result to a broader class of functions.
Denote with $\mathbb{S}_d$ the set of $d\times d$ symmetric real matrices, with $\mathbb{S}_d^+$ the positive semidefinite ones, and with $\mathbb{S}_d^{++}$ the positive definite ones.
\begin{lemma}\label{anis}
	Let $Y\in\mathbb{S}_d^{++}$, then the statement of Theorem \ref{thm_A} holds for $f(X)=\langle X,Y\rangle$.
\end{lemma}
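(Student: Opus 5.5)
The plan is to reduce Lemma \ref{anis} to Theorem \ref{lapl} by a linear change of variables that turns $\langle X,Y\rangle$ into a trace. Write $Y=A^2$ with $A=Y^{1/2}\in\mathbb{S}_d^{++}$. Then
\[
\langle X,Y\rangle=\Tr(AXA),
\]
so the hypotheses read $\Tr(A\nabla^2V A)\le\Lambda_V$ and $\Tr(A\nabla^2W^*A)\le1/\lambda_W$, and the target is $\Tr(A\nabla^2\phi A)\le\sqrt{\Lambda_V/\lambda_W}$.

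Set $\tilde V(x)\coloneq V(Ax)+c$ and $\tilde\phi(x)\coloneq\phi(Ax)$. Here $c=\log\det A$ is the normalizing constant making $e^{-\tilde V}dx=(A^{-1})_\#\mu$ a probability measure. Then $\nabla^2\tilde V(x)=A\nabla^2V(Ax)A$, so $\Delta\tilde V\le\Lambda_V$.

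Next compute $\nabla\tilde\phi(x)=A\nabla\phi(Ax)$. This is the composition $A\circ T\circ A$, and it pushes $\tilde\mu\coloneq(A^{-1})_\#\mu$ forward to $\tilde\nu\coloneq A_\#\nu$. It is the gradient of the convex function $\tilde\phi$, so by Brenier's uniqueness it is the Brenier map between $\tilde\mu$ and $\tilde\nu$.

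The density of $\tilde\nu$ is $e^{-\tilde W}$ with $\tilde W(y)=W(A^{-1}y)+\log\det A$, which is still convex. Its Legendre transform is
\[
\tilde W^*(z)=\sup_y\big(\langle z,y\rangle-W(A^{-1}y)\big)-\log\det A=W^*(Az)-\log\det A,
\]
since we may substitute $y=Ay'$ in the supremum. Hence $\nabla^2\tilde W^*(z)=A\nabla^2W^*(Az)A$, and $\Delta\tilde W^*\le1/\lambda_W$.

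The regularity requirements ($C^{1,1}_{\rm loc}$) and convexity are preserved under these affine changes of variables. Theorem \ref{lapl} therefore applies and gives
\[
\Delta\tilde\phi(x)=\Tr\big(A\nabla^2\phi(Ax)A\big)=\langle\nabla^2\phi(Ax),Y\rangle\le\sqrt{\Lambda_V/\lambda_W}
\]
a.e. Since $A$ is a bijection, this is exactly the bound on $\|\langle\nabla^2\phi,Y\rangle\|_{L^\infty}$.

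The only step that needs care is the choice of scaling directions, so that both the $V$-side and the $W^*$-side transform by conjugation with the same matrix $A$ rather than by $A$ and $A^{-1}$. This is why we rescale $\mu$ by $A^{-1}$ but $\nu$ by $A$. With that choice, every remaining step is a direct computation.
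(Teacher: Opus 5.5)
Your proposal is correct and is essentially the paper's proof. It uses the same $A=Y^{1/2}$, the same rescaled data $V(Ax)$, $W(A^{-1}x)+\log\det A$ and $\phi(Ax)$, and the same direct application of Theorem \ref{lapl}. There is one harmless sign slip: the density of $(A^{-1})_\#\mu$ is $\det A\, e^{-V(Ax)}$, so the normalizing constant is $c=-\log\det A$, matching the paper's $V_A(x)=V(Ax)-\log\det A$, not $+\log\det A$. This does not affect any Hessian.
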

\begin{proof}
	Let $T=\nabla\phi$ be the Brenier map transporting $\mu$ to $\nu$, and $A\coloneq Y^{1/2}$. Define the following: 
	\begin{gather*}
		\phi_A(x)\coloneq\phi(Ax),\\
		V_A(x)\coloneq V(Ax)-\log\det A,\\
		W_A(x)\coloneq W(A^{-1}x)+\log\det A.
	\end{gather*}
	Then the probability measures $d\mu_A\coloneq e^{-V_A}dx,\,d\nu_A\coloneq e^{-W_A}dx$ satisfy the Laplacian estimates: 
	$$\Delta V_A(x)=\langle\nabla^2 V(Ax), Y\rangle\leq\Lambda_V,\quad\Delta W_A^*(y)=\langle\nabla^2 W^*(Ay), Y\rangle\leq 1/\lambda_W\quad\text{a.e. in }\R^d.$$
	We can easily verify that $\phi_A$ is convex and $(\nabla\phi_A)_\#\mu_A=\nu_A$, therefore $\nabla\phi_A$ is the Brenier map and Theorem \ref{lapl} yields $$\|\langle\nabla^2\phi, Y\rangle\|_{L^\infty}=\|\Delta\phi_A\|_{L^\infty}\leq\sqrt{\Lambda_V/\lambda_W}.$$
\end{proof}


\begin{proof}[Proof of Theorem \ref{thm_A}]
	Denote by $$\de f(X)\coloneq\{Y\in\mathbb{S}_d\colon f(Z)\ge f(X)+\langle Z-X,Y\rangle\;\;\forall Z\in\mathbb{S}_d\}$$ the subdifferential of $f$ at $X$. By convexity and homogeneity \cite[Theorem 13.2]{rockafellar1997convex}, we can write
	$$f(X)=\sup_{Y\in\de f(0)}\langle X,Y\rangle\;\text{ with equality for }Y\in\de f(X)\subset\de f(0).$$
	Observe that for any $X\in\mathbb{S}_d^{++}$, the subdifferential satisfies $\de f(X)\subset\mathbb{S}_d^+$. Indeed, for any $Y\in\partial f(X)$ and $v\in\R^d$, choose $\alpha>0$ such that $0\preceq X-\alpha vv^T\preceq X$. Then,
	$$\langle X-\alpha vv^T,Y\rangle\leq f(X-\alpha vv^T)\le f(X)=\langle X,Y\rangle,$$
	which implies $\langle v,Yv\rangle\ge0$. By the monotonicity of $f$, $$\beta\coloneq\min\{f(X)\colon X\succeq0,\,\Tr(X)=1\}>0,$$ hence for any $X\succeq0$, the estimate $f(X)\ge\beta\Tr(X)$ holds. Our next goal is to obtain a positive definite matrix $E\in\de f(0)$, and we will do so in two steps: \begin{enumerate}
		\item first we prove that there is no non-zero vector $v$ in the intersection of $\ker Y$ for all ${0\preceq Y\in\de f(0)}$;
		\item second, we iteratively construct a positive definite matrix $E\in\de f(0)$.
	\end{enumerate}
	
	\emph{Step 1.} Suppose that there exists a vector $v\neq0$ such that $$Yv=0\quad\text{for all}\quad0\preceq Y\in\de f(0).$$ Then by writing $$f(X)=\sup_{0\preceq Y\in\de f(0)}\langle X,Y\rangle\quad\text{for any}\;X\succ0,$$ we obtain $$f(X)=f(X+\alpha vv^T)\ge\beta\Tr(X) + \alpha\beta|v|^2\quad\forall\alpha>0$$ which yields a contradiction as $\alpha\to+\infty$.
	
	\emph{Step 2.} Let $v\neq0$ and denote with $Y_v$ a matrix satisfying $Y_vv\neq0$ and $0\preceq Y_v\in\de f(0)$. If $Y_1\coloneq Y_v$ is non-singular, we choose $E\coloneq Y_1$. Otherwise, let $0\neq v_2\in\ker Y_1$, and define
	$$0\preceq Y_2\coloneq\frac{Y_1+Y_{v_2}}2\in\de f(0).$$ Notice that if $Y_2w=0$ for some vector $w$, then $$\langle w,Y_1w\rangle + \langle w,Y_{v_2}w\rangle=|Y_1^{1/2}w|^2+|Y_{v_2}^{1/2}w|^2=0,$$ which implies $Y_1w=0$ and $Y_{v_2}w=0$. Hence $\ker Y_2\lneqq\ker Y_1$, and repeating the same construction on $Y_2$ yields a positive definite matrix $E\in\de f(0)$ after a finite number of steps.\\
	
	Let $0\preceq Y\in\de f(0)$ and $\e>0$, and notice that 
	\begin{gather*}
		\langle \nabla^2V, Y+\e E\rangle\leq(1+\e)f(\nabla^2V)\leq(1+\e)\Lambda_V,\\
		\langle \nabla^2W^*, Y+\e E\rangle\leq(1+\e)f(\nabla^2W^*)\leq(1+\e)/\lambda_W\quad\text{a.e. in}\,\R^d.
	\end{gather*}
	Hence, applying Lemma \ref{anis} to $Y+\e E\in\mathbb{S}_d^{++}$ yields $$\|\langle\nabla^2\phi, Y\rangle\|_{L^\infty}\leq(1+\e)\sqrt{\Lambda_V/\lambda_W}.$$ Taking $Y$ in a countable dense subset of $\de f(0)\cap\mathbb{S}_d^+$, we obtain $$\langle\nabla^2\phi(x), Y\rangle\leq\sqrt{\Lambda_V/\lambda_W}\quad\text{for all}\;Y\in\de f(0)\cap\mathbb{S}_d^+,\; x-a.e.$$ 
	In particular, up to a set of Lebesgue measure zero we have
	\begin{align*}
		f(\nabla^2\phi(x))&\leq f(\nabla^2\phi(x)+\e\Id)=\langle \nabla^2\phi(x)+\e\Id, Y_\e\rangle\\
		&=\langle \nabla^2\phi(x),Y_\e\rangle + \langle \e\Id,Y_\e\rangle\leq\sqrt{\Lambda_V/\lambda_W}+\e f(\Id)
		\end{align*}
	where $\e>0$ and $0\preceq Y_\e\in\de f(\nabla^2\phi(x)+\e\Id)$. Taking a countable sequence of $\e\to0^+$ concludes the proof.
\end{proof}

\appendix
\section{}
In this appendix we prove two technical results, namely Caffarelli's lemma on the decay of the incremental quotient $\delta^2_{\e y}\phi$, and the approximation properties of the operator $\Delta_\e$.

\begin{lemma}
	\label{lemma:finite max (Gen)}
	Let $d\mu=f(x)\,dx,\, d\nu=g(x)\,dx\in\mathcal{P}(\R^d)$ with finite second moments and $\nabla\phi=T$ be the optimal transport taking $\mu$ to $\nu$.
	Assume that $\log f\in L^\infty_{\rm loc}(\R^d)$ and that $g$ is bounded away from zero in the ball $B_{j}$ for some $j>0$ and vanishes outside $B_{j}$. 
	Then,
	$$T(x)\to j\frac{x}{|x|}\qquad\text{uniformly as }|x|\to\infty.$$
	In particular, for any fixed $\e>0$, the function $\phi(x+\e y)+\phi(x-\e y)-2\phi(x)\to0$ as $|x|\to\infty$, uniformly for $y\in\mathbb{S}^{d-1}$.
\end{lemma}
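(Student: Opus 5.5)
We would follow Caffarelli's original argument, which uses only the cyclical monotonicity of the Brenier map together with the lower bound on $f$ and the fact that $\nu$ fills $B_j$. First, since $g$ vanishes outside $B_j$, $T(x)\in\overline{B_j}$ for a.e.\ $x$. By convexity of $\phi$ and the regularity given by Theorem \ref{thm:fig-reg} (or simply by passing to the continuous representative on the subdifferential), we have $\partial\phi(\R^d)\subset\overline{B_j}$. Then $\phi$ is $j$-Lipschitz.

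The key step is to show that for every direction $e\in\mathbb{S}^{d-1}$ and every $\delta>0$ the set $\{x : \langle T(x),e\rangle\ge j-\delta\}$ contains a half-space-like region far out in direction $e$. We fix $\delta$ and consider the cap $C_\delta(e)=\{y\in B_j : \langle y,e\rangle> j-\delta\}$, which has positive $\nu$-measure since $g$ is bounded below on $B_j$. Its preimage $T^{-1}(C_\delta(e))$ then has positive $\mu$-measure, so it contains some point $x_e$ with $T(x_e)\in C_\delta(e)$; by compactness of the sphere we can choose finitely many such points for a $\delta$-net of directions, all lying in a fixed ball $B_M$. Now take any $x$ with $|x|$ large, set $e=x/|x|$, and pick a net direction $e'$ close to $e$. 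Monotonicity gives $\langle T(x)-T(x_{e'}),x-x_{e'}\rangle\ge0$, hence
\[
\langle T(x),x\rangle\ \ge\ \langle T(x_{e'}),x\rangle-|T(x)-T(x_{e'})|\,M\ \ge\ (j-C\delta)|x|-2jM.
\]
Dividing by $|x|$ yields $\langle T(x),x/|x|\rangle\ge j-C\delta-2jM/|x|$. Combined with $|T(x)|\le j$, elementary geometry of the ball gives $|T(x)-j x/|x||\le C'\sqrt{\delta+M/|x|}$. Letting $|x|\to\infty$ and then $\delta\to0$ gives the uniform convergence. The assumption $\log f\in L^\infty_{\rm loc}$ is used here to guarantee that $\phi$ is differentiable everywhere and $T$ is continuous, via Theorem \ref{thm:fig-reg}, so that the pointwise statement makes sense everywhere rather than only a.e.

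For the \emph{in particular}, we write
\[
\delta^2_{\e y}\phi(x)=\int_0^{\e}\langle T(x+ty)-T(x-ty),y\rangle\,dt .
\]
For $|x|\to\infty$ with $|y|=1$ and $t\le\e$, both $(x\pm ty)/|x\pm ty|$ converge to $x/|x|$ uniformly in $y$. By the first part, $T(x\pm ty)\to j\,x/|x|$ uniformly, so the integrand tends to $0$ uniformly and the integral does as well.

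The main obstacle is the monotonicity step: one has to make sure the reference points $x_{e'}$ can be chosen in a fixed bounded set uniformly over directions. This is where the finite $\delta$-net and the positivity of $g$ on every cap are essential. The continuity needed to evaluate $T$ pointwise is handled by Theorem \ref{thm:fig-reg}, applied with $Y=B_j$ convex.
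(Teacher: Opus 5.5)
Your proof is correct, but its core step takes a different route from the paper's.

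\textbf{The paper's route.} It works pointwise at the far point. For fixed $x_0$ and $t$, let $\Gamma$ be the cone with apex $T(x_0)$, axis $x_0$ and half-opening $\pi/2-t$. Monotonicity puts $T^{-1}(\Gamma)$ inside a region $\Omega$ that misses a ball of radius comparable to $|x_0|\tan t$. Hence $\inf_{B_j}g\cdot\mathcal{L}^d(\Gamma\cap B_j)\le\nu(\Gamma)\le\mu(\R^d\setminus B)\to0$ by tightness of $\mu$. A geometric step, left implicit in the paper, then shows that a cone whose intersection with $B_j$ has small volume must have its apex near $jx_0/|x_0|$ as $t\to0$.

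\textbf{Your route.} You use $T_\#\mu=\nu$ only once, to select finitely many anchor points $x_i\in B_M$ with $T(x_i)$ in the caps $C_\delta(e_i)$ of a $\delta$-net of directions. After that, a single monotonicity inequality against the nearest anchor gives $\langle T(x),x/|x|\rangle\ge j-C\delta-2jM/|x|$. Together with $|T|\le j$, this yields $|T(x)-jx/|x||\lesssim\sqrt{\delta+M_\delta/|x|}$.

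\textbf{What each buys.} Your argument:
\begin{itemize}
\item is deterministic once the anchors are fixed;
\item gives an explicit modulus of convergence;
\item avoids the cone-volume geometry;
\item needs, for the convergence itself, only $\nu(C_\delta(e))>0$ for every cap, not a quantitative lower bound on $g$.
\end{itemize}
The paper's argument needs no covering or selection of anchor points, and ties the rate directly to the tail of $\mu$.

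The second-difference statement is handled essentially as in the paper; your integral identity replaces its mean value step. In both versions, the everywhere-defined, continuous $T$ comes from Theorem \ref{thm:fig-reg}.
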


\begin{proof}
	We begin by noticing that, as a consequence of Theorem \ref{thm:fig-reg}, $T$ is continuous on $\R^d$ and, in particular, the map $T$ is well defined at every point.
	
	Let $x_{0} \in \R^d$ and $t \in (0, \pi/4)$ be fixed, and consider the cone with vertex at $T(x_{0})$ and pointing in the $x_{0}$-direction
	$$\Gamma\coloneq \bigg\{ y \in \R^d\colon \angle (x_{0}, y-T(x_{0})) \leq \frac{\pi}{2} - t \bigg\}.$$
	By the convexity of $\phi$ we see that
	$$\angle (x-x_{0}, T(x)-T(x_{0})) \leq \frac{\pi}2,$$
	hence
	$$\angle (x-x_0,x_0) \leq \angle (x-x_{0}, T(x)-T(x_{0})) + \angle (x_{0}, T(x)-T(x_{0}))
	\leq \pi - t  \qquad \forall \,x \text{ s.t. } \,T(x) \in \Gamma,$$
	and so, up to a set of measure zero, the preimage of $\Gamma$ under $T$ is contained in the (concave) cone
	$$\Omega\coloneq \{ x \in \R^{n}\colon \angle(x_{0}, x-x_{0}) \leq \pi - t \}. $$
	Moreover, since $T_{\#}\mu = \nu$, 
	$$\inf_{x \in B_{j}}g(x)\, \mathcal{L}^{n}(\Gamma \cap B_{j}) \leq \nu(\Gamma \cap B_{j}) = \nu(\Gamma) \leq \mu(\Omega).$$
	Let $B = B_{(|x_{0}|\tan{t})/2}$, and notice that $\Omega \subseteq \R^d \setminus B$. This proves that $\mu(\Omega) \leq \mu(\R^d \setminus B)$.
	
	Now, $\mu(\R^d \setminus B) \to 0$ as $|x_{0}| \to \infty$ since $B$ covers $\R^d$ as $|x_{0}| \to \infty$. Recalling that $g$ is bounded away from zero in $B_{j}$, we have that
	$$ \lim_{|x_{0}| \to \infty} \mathcal{L}^d(\Gamma \cap B_{j}) = 0. $$
	Letting $t \to 0$, we see that $T(x_{0}) \to j \frac{x_{0}}{|x_{0}|}$. As the point $x_{0}$ was fixed arbitrarily, $${\nabla \phi(x) = T(x) \to j \frac{x}{|x|}}\text{ uniformly as }|x| \to \infty.$$ Thus, for any fixed $\e > 0$,
	\begin{gather*}
		|\phi(x+\e y)+\phi(x-\e y)-2\phi(x)|=\e|\langle\nabla\phi(x+\eta y)-\nabla\phi(x-\eta y),y\rangle|\quad\text{ for some }\eta\in[0,\e]\\
		\leq\e j\bigg[\bigg|\frac{x+\eta y}{|x+\eta y|}-\frac{x-\eta y}{|x-\eta y|}\bigg|+o(1)\bigg] \to 0\text{ as }|x| \to \infty, \text{ uniformly for }y\in\mathbb{S}^{d-1}.
	\end{gather*}
\end{proof}

\begin{lemma}
	Let $\e>0$ and $f\colon\R^d\to\R$, denote with $\Delta f$ its distributional Laplacian. Then,
	$$\lim_{\e\to0}\frac{\Delta_\e f}{\e^2}=\frac{\Delta f}{2d}\text{ in the sense of distributions}.$$
	Further, if $\Delta f\leq\ell$, then
	\begin{equation}\label{lapl_e}
		\Delta_\e f\leq\frac{\ell}{d}\frac{\e^2}2,\quad\forall\e>0.
	\end{equation}
\end{lemma}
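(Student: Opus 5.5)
The plan is to reduce both claims to the mean value formula for spherical averages. Let $h_\e(x)\coloneq\avgint_{\mathbb{S}^{d-1}}f(x+\e y)\,dy$, so that $\Delta_\e f=h_\e-f$. For smooth $f$, Green's identity (the classical computation behind the mean value property) gives
\begin{equation*}
	\frac{d}{dr}\avgint_{\mathbb{S}^{d-1}}f(x+ry)\,dy=\avgint_{\mathbb{S}^{d-1}}\langle\nabla f(x+ry),y\rangle dy=\frac{r}{d}\avgint_{B_1}\Delta f(x+rz)\,dz,
\end{equation*}
which is exactly the identity already used in the bootstrap step of the proof of Theorem \ref{lapl}. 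Integrating in $r$ from $0$ to $\e$ then gives
\begin{equation*}
	\Delta_\e f(x)=\frac1d\int_0^\e r\avgint_{B_1}\Delta f(x+rz)\,dz\,dr.
\end{equation*}
Since both sides are linear and continuous in $f$ in the sense of distributions, I would first prove this for smooth $f$ and extend it to distributions by mollification. Equivalently, I would write $\Delta_\e f=f\ast(\sigma_\e-\delta_0)$, where $\sigma_\e$ is the normalized surface measure on $\partial B_\e$, and verify that $\sigma_\e-\delta_0=\Delta K_\e$ for an explicit nonnegative radial kernel $K_\e$ supported in $\overline{B_\e}$ with total mass $\int K_\e=\frac{\e^2}{2d}$. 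In that form $\Delta_\e f=(\Delta f)\ast K_\e$ holds for every distribution $f$.

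The bound \eqref{lapl_e} then follows directly. If $\Delta f\le\ell$ as distributions, then since $K_\e\ge0$,
\begin{equation*}
	\Delta_\e f=(\Delta f)\ast K_\e\le\ell\int K_\e=\frac{\ell}{d}\frac{\e^2}{2}.
\end{equation*}
The inequality is understood pointwise wherever $\Delta_\e f$ is defined, for instance for $f$ continuous as in our application. In the smooth picture, the same bound comes from $\frac1d\int_0^\e r\,\ell\,dr=\frac{\ell\e^2}{2d}$.

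For the distributional limit, I test against $\psi\in C^\infty_c$. By the symmetry of the sphere, $\int\Delta_\e f\,\psi=\int f\,\Delta_\e\psi$. A Taylor expansion of $\psi$ to second order, with the odd terms cancelling and $\avgint y_iy_j\,dy=\delta_{ij}/d$, gives $\Delta_\e\psi/\e^2\to\Delta\psi/(2d)$ uniformly with supports in a fixed compact set. This yields the limit for $f\in L^1_{\rm loc}$; alternatively, $K_\e/\e^2\to\frac{1}{2d}\delta_0$ gives it for general distributions.

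The main obstacle I expect is the construction of $K_\e$, or equivalently justifying the integrated mean value identity for nonsmooth $f$. To find $K_\e$, I would solve the radial ODE $\Delta K=\sigma_\e-\delta_0$ with the fundamental solution $\Gamma$: take $K_\e=\Gamma\ast\sigma_\e-\Gamma$, which vanishes outside $B_\e$ by Newton's theorem. Its nonnegativity follows from the superharmonicity of $-\Gamma$ (the sphere average of $\Gamma$ is at least $\Gamma$ inside the ball, up to sign convention). Its mass is computed by testing against $|x|^2/2$, whose Laplacian is $d$: this gives $\int K_\e=\frac1d\langle\sigma_\e-\delta_0,|x|^2/2\rangle=\frac{\e^2}{2d}$.
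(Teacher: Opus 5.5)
Your proposal is correct and follows the paper's route: the bound comes from integrating in $r$ the spherical mean-value identity $\frac{d}{dr}\avgint_{\mathbb{S}^{d-1}}f(x+ry)\,dy=\frac rd\avgint_{B_1}\Delta f(x+rz)\,dz$, and the limit comes from a second-order expansion tested against test functions, using $\avgint_{\mathbb{S}^{d-1}}y_iy_j\,dy=\delta_{ij}/d$. Your kernel $K_\e=\Gamma\ast\sigma_\e-\Gamma$ is just the integrated form of that same identity (for $|w|<\e$ it equals $\frac{1}{d|B_1|}\int_{|w|}^{\e}r^{1-d}\,dr$), and writing $\Delta_\e f=(\Delta f)\ast K_\e$ usefully makes rigorous the passage to nonsmooth $f$, which the paper leaves implicit.
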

\begin{proof}
	First note that letting $ \nabla^2f$ be the distributional derivative of $f$ we have
	$$\lim_{\e\to 0}\frac{\delta^2_{\e y}f}{\e^2}=\langle \nabla^2f\,y,y\rangle,$$
	in the sense of distributions, where the distribution $\langle \nabla^2f\,y, y \rangle$ is defined as
	$$\left \langle \langle \nabla^2f\,y, y \rangle, \eta \right \rangle =\int f(x) \langle \nabla^2\eta(x) y, y \rangle  dx,$$
	for test functions $\eta$. Hence, $$\lim_{\e\to 0}\frac{\Delta_{\e}f}{\e^2}=\frac{1}{2}\lim_{\e\to 0}\bigg[\avgint_{\mathbb{S}^{d-1}}\frac{\delta^2_{\e y}f}{\e^2}dy\bigg]=\frac{1}{2}\avgint_{\mathbb{S}^{d-1}}\langle \nabla^2f\,y,y\rangle dy.$$
	For each $i,j\in\{1,\dots,d\}$, by the symmetry of $\mathbb{S}^{d-1}$ under $y_j\mapsto -y_j$,
	$$\avgint_{\mathbb{S}^{d-1}}y_iy_j dy=\delta_{ij}\avgint_{\mathbb{S}^{d-1}}y_i^2 dy=\delta_{ij}\frac{1}{d}.$$
	It follows that
	\begin{align*}
		\lim_{\e\to 0}\frac{\Delta_{\e}f}{\e^2}=\frac{1}{2}\sum_{i=1}^{d}\partial_{ii}^2f\avgint_{\mathbb{S}^{d-1}}y_i^2 dy=\frac{1}{2d}\sum_{i=1}^{d}\partial_{ii}^2f=\frac{\Delta f}{2d}.
	\end{align*}
	Next we prove \eqref{lapl_e}: we have
	$$\Delta_{\e}f(x)=\int_0^{\e} \frac{d}{dr}\bigg[\avgint_{\mathbb{S}^{d-1}}\big(f(x+ry)-f(x)\big)dy\bigg]dr.$$
	Integration by parts yields
	\begin{align*}
		&\frac{d}{dr}\bigg[\avgint_{\mathbb{S}^{d-1}}\big(f(x+ry)-f(x)\big)dy\bigg]=\avgint_{\mathbb{S}^{d-1}}\frac{\partial}{\partial r}\big(f(x+ry)-f(x)\big)dy=\avgint_{\mathbb{S}^{d-1}}\langle\nabla f(x+ry),y\rangle dy\\
		&=\avgint_{\mathbb{S}^{d-1}}\frac{1}{r}\langle\nabla_y [f(x+ry)],y\rangle dy=\frac{1}{d}\avgint_{ B_1}\frac{1}{r}\Delta_y[f(x+ry)]dy=\frac{r}{d}\avgint_{B_1}\Delta f(x+ry)dy\leq\frac{\ell r}{d},
	\end{align*}
	where the last inequality used $\Delta f\le \ell$. It follows that $$\Delta_{\e}f(x)\leq\frac{\ell}{d}\int_0^{\e}r dr=\frac{\ell}{d}\frac{\e^2}{2}.$$
\end{proof}

\bibliographystyle{plain}
\nocite{*}
\bibliography{ref.bib}

\end{document}